\documentclass[12pt,hidelinks]{amsart}
\usepackage{amsmath}
\usepackage{bbm}
\usepackage{enumitem}
\usepackage{amsfonts}
\usepackage{amssymb}
\usepackage{color}
\usepackage[dvipsnames]{xcolor}
\usepackage{amsthm,doi}
\usepackage{graphicx}
\usepackage{hyperref}
\usepackage{mathtools}
\hypersetup{hidelinks}

\title[Zeros of the Spectrogram of Colored Noise]{Zeros of the Spectrogram of Colored Noise}

\author[L. A. Escudero]{Luis Alberto Escudero}
\address[L. A. E.]{Acoustics Research Institute, Austrian Academy of Sciences, Dominikanerbastei 16, 1010 Vienna, Austria}
\email{luis.escudero@oeaw.ac.at}

\author[G. Koliander]{G\"{u}nther Koliander}
\address[G. K.]{Acoustics Research Institute, Austrian Academy of Sciences, Dominikanerbastei 16, 1010 Vienna, Austria}
\email{guenther.koliander@oeaw.ac.at}

\author[J. L. Romero]{Jos\'{e} Luis Romero}
\address[J. L. R.]{Faculty of Mathematics, University of Vienna, Oskar-Morgenstern-Platz 1, A-1090 Vienna, Austria, and Acoustics Research Institute, Austrian Academy of Sciences, Dominikanerbastei 16,	1010 Vienna, Austria}
\email{jose.luis.romero@univie.ac.at}

\thanks{This research was funded in whole or in part by the Austrian Science Fund (FWF): 10.55776/Y1199. For open access purposes, the authors have applied a CC BY public copyright license to any author-accepted manuscript version arising from this submission.}

\keywords{Short-time Fourier transform, zero set, colored noise, first intensity, spectral density}

\subjclass[2020]{42B10, 60G55, 60G60, 30C15, 94A12}

\newcommand{\bZ}{\mathbb{Z}}
\newcommand{\bN}{\mathbb{N}}
\newcommand{\bR}{\mathbb{R}}
\newcommand{\bC}{\mathbb{C}}
\newcommand{\lone}[1]{L^1 \left(#1\right)}

\newcommand{\rd}[1]{\mathcal{S}(#1)}
\newcommand{\rdf}[1]{\mathcal{S}'(#1)}

\def\noise{\mathcal{W}}
\def\xvect{\boldsymbol{x}}
\def\xel{x}
\def\il{y_1}
\def\ill{y_2}
\def\mapcoord{\chi}
\def\shift{\tau}
\def\frequency{\omega}

\def\psd{S}
\newcommand{\smoothedpsd}[1]{#1_{\phi}}
\def\rhoTF{\rho_{1,\, \text{Spec}}}
\def\rhoBargmann{\rho_{1,\, \text{Barg}}}
\def\hatrhoTF{\widehat{\rho}_{1,\, \text{Spec}}}

\newcommand{\discreteSTFT}{\mathcal{V}}

\def\window{\varphi}
\newcommand{\cn}{\noise_{\psd{}}}

\DeclarePairedDelimiter\abs{\lvert}{\rvert}
\DeclarePairedDelimiter\bigabs{\big\lvert}{\big\rvert}
\DeclarePairedDelimiter\Bigabs{\Big\lvert}{\Big\rvert}
\DeclarePairedDelimiter\biggabs{\bigg\lvert}{\bigg\rvert}

\newcommand{\ip}[2]{\langle #1,\, #2\rangle}
\newcommand{\bigip}[2]{\big\langle #1,\, #2\big\rangle}
\newcommand{\E}[1]{\mathbb{E}\big[ #1 \big]}
\newcommand{\bE}{\mathbb{E}}
\newcommand{\biggE}[1]{\mathbb{E}\biggl[ #1 \biggr]}
\newcommand{\Econd}[2]{\mathbb{E}\big[ #1 \, \big\vert \, #2  \big]}
\def\numsim{n}

\def\simnoise{\vec{\mathcal{W}}}
\def\simwhite{\boldsymbol{\zeta}}
\def\gridstep{\delta}  
\def\empintegral{Z}

\def\fouriersymb{\mathcal{F}}
\newcommand{\ifft}[1]{\fouriersymb^{-1}(#1)}
\newcommand{\fft}[1]{\fouriersymb(#1)}
\newcommand{\autocorr}[1]{K_{#1}}
\newcommand{\stft}[2]{V_{#1} #2}
\newcommand{\bargmann}[1]{\mathcal{B} [#1]}
\newcommand{\fourier}[1]{\fouriersymb \{#1\}}

\newcommand{\mo}[1]{\mathcal{M}_{#1}}
\newcommand{\tr}[1]{\mathcal{T}_{#1}}
\let\Re\relax

\DeclareMathOperator{\Re}{Re}
\DeclareMathOperator{\sgn}{sgn}
\newcommand{\tfsnr}{\Gamma}

\newtheorem{thm}{Theorem}[section]
\newtheorem{prop}[thm]{Proposition}
\newtheorem{lemma}[thm]{Lemma}
\newtheorem{definition}[thm]{Definition}
\newtheorem{remark}[thm]{Remark}
\newtheorem*{remark*}{Remark}

\allowdisplaybreaks
\usepackage{bigints}
\usepackage{subcaption}
\usepackage[section]{placeins}
\usepackage{eqparbox}
\let\oldgeq\geq
\renewcommand{\geq}{\,\oldgeq\,}

\numberwithin{equation}{section}

\begin{document}\setlength{\jot}{10pt}
	
	\begin{abstract}
    We study the expected number of zeros of the Short-Time Fourier Transform (STFT) with a Gaussian window for signals degraded by complex Gaussian colored noise. We provide an exact formula for this quantity and investigate its asymptotic behavior. From a computational perspective, we propose an algorithmic method to recover a smoothed power spectral density (PSD) directly from the spatial distribution of the transform's zeros. Our results formalize the commonly-held heuristic that detection algorithms based on spectrogram zeros, though designed for white noise, also perform adequately under moderately colored noise.
	\end{abstract}
	
	\maketitle
	
	\section{Introduction and results}
	\subsection{Introduction}

The short-time Fourier transform (STFT)
of a function $f: \mathbb{R}\to\mathbb{C}$ is defined with respect to an auxiliary smooth and rapidly decaying \emph{window function} $\window: \mathbb{R}\to\mathbb{C}$ as
    \begin{equation}\label{eq:stft_def}
        \stft{\window}{f}(x, \omega) = \int_{-\infty}^{\infty} f(t) \, \overline{\window(t-x)} \, e^{-2\pi i \omega t} \, dt, \qquad x, \omega \in \bR.
    \end{equation}
    In applications, $f$ is often called a \emph{signal} and its spectrogram $\text{Spec}_f \coloneqq \abs{\stft{\window}{f}}^2$ quantifies the signal's energy distribution over the time-frequency plane. Among all possible windows, the Gaussian $\window(t) \coloneqq 2^{1/4} e^{-\pi t^2}$ holds a privileged role because it simultaneously minimizes time and frequency spread, and is the preferred choice in practice. For this choice, the STFT is sometimes referred to as the Gabor transform.
    
    Classically, time-frequency analysis has prioritized the study of high-energy regions, specifically focusing on the spectrogram's maxima as primary carriers of signal information. More recently, an alternative paradigm has revealed the rich content encoded by the \emph{zeros} of the transform, particularly when signals are submerged in noise \cite{gardner2006sparse,flandrin2015time}. Several novel signal processing strategies exploit 
    the statistical pattern of the spectrogram zeros to extract signals from noise \cite{flandrin2015time, flandrinsilence,
    bardeonzeros, bardetf,
    ghosh2022signal, miramont2024unsupervised, miramont2025filtering, pascal2024point}.

In most of the existing literature on spectrogram zeros, noise is assumed to
be stationary and strictly uncorrelated across time, a condition known as \emph{whiteness} because all noise frequencies contribute equally to a constant \emph{power spectral density}. While never strictly satisfied, these assumptions are certainly mathematically convenient. The zeros of the STFT of white noise form a \emph{stationary point process} on $\mathbb{R}^2$. Moreover, the use of the Gaussian window in \eqref{eq:stft_def} leads to a very well-studied stationary point process, namely the zero set of the so-called Gaussian Analytic Function (GAF) \cite{gafbook, whatis, bardeonzeros, bardetf}. 
    
A key feature of a random point process is its first intensity function $\rho_1$. Integrating $\rho_1$ over a Borel set yields the expected number of points therein. For the zeros of the Gabor transform of complex white noise, this first intensity is identically $1$, so the expected number of zeros in any Borel set equals its area. Consequently, deterministic signals can be detected as local anomalies as they form distinct regions that disrupt the uniform spacing of background zeros \cite{flandrin2015time, flandrinsilence, 
bardeonzeros, bardetf,
    ghosh2022signal, miramont2024unsupervised, miramont2025filtering}.

While mathematically convenient, modeling noise as strictly white is sometimes met with concern by practitioners, because mathematical methods could be overly adapted to this assumption. The goal of this article is to study the first intensity of the zeros of the Gabor transform for signals corrupted by stationary, but not necessarily white, noise and to assess the extent to which certain common heuristics are justified. Stationary noise is mainly described by its power spectral density (PSD), which is
the Fourier transform of its covariance function. By quantifying how the energy of the process is distributed across the frequency spectrum, the PSD reveals that real-world noise is typically not white but ``colored''. Examples are ubiquitous: acoustic, seismic, and physiological recordings are all characterized by PSDs that decay as an inverse power of frequency, a phenomenon often referred to as power-law noise \cite{Voss1975, bak1988self, press1978flicker, kobayashi1982, gilden19951}.

As a first contribution, we describe the first intensity of the zeros of the STFT 
of a noisy signal in terms of the PSD of the noise. Importantly, the transition from the ideal white-noise model to the more realistic colored-noise scenario alters the statistics of the zero set: the new process is stochastically invariant along the time variable but not along the frequency variable (partial stationarity); see Figure \ref{fig:intro}.

    \begin{figure}[tbh]
      \centering
      \begin{subfigure}[b]{0.48\linewidth}
          \centering
          \includegraphics[width=\linewidth]{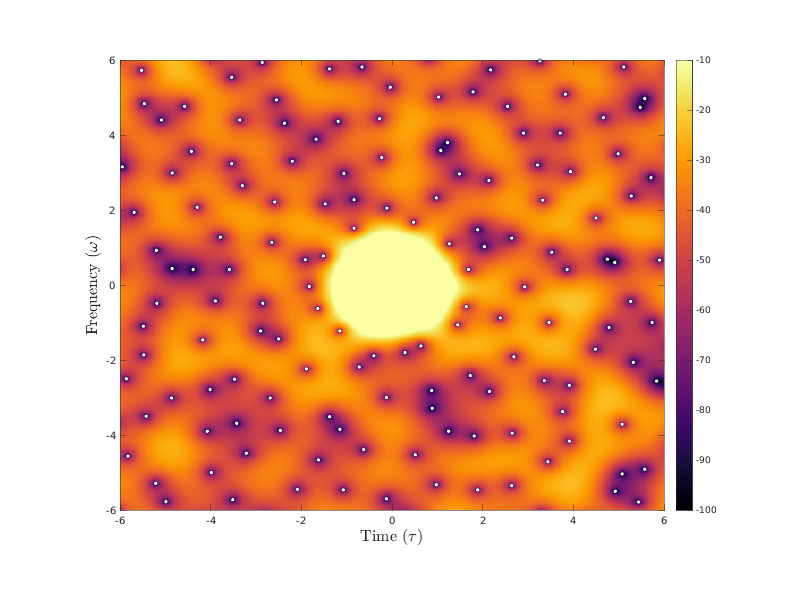}
      \end{subfigure}\hfill
      \begin{subfigure}[b]{0.48\linewidth}
          \centering
          \includegraphics[width=\linewidth]{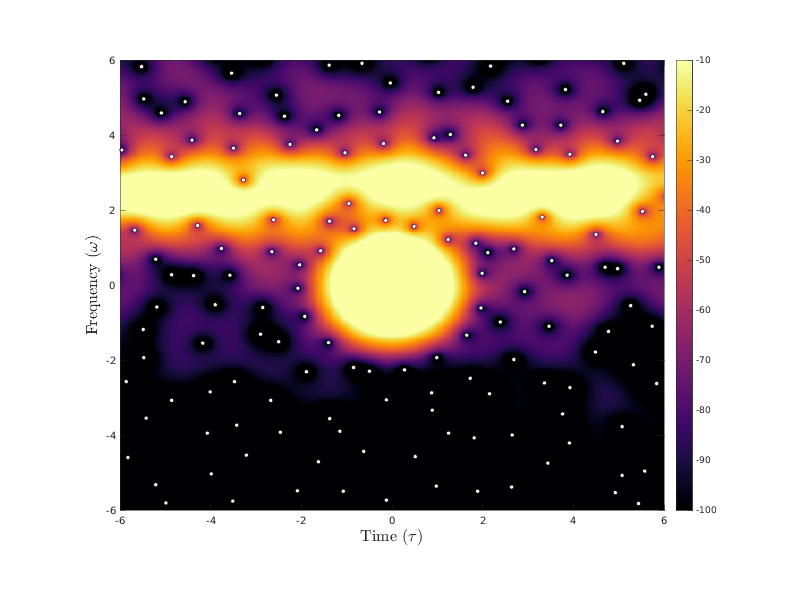}
      \end{subfigure}

      \vspace{0.5em}

      \begin{subfigure}[b]{0.48\linewidth}
          \centering
          \includegraphics[width=\linewidth]{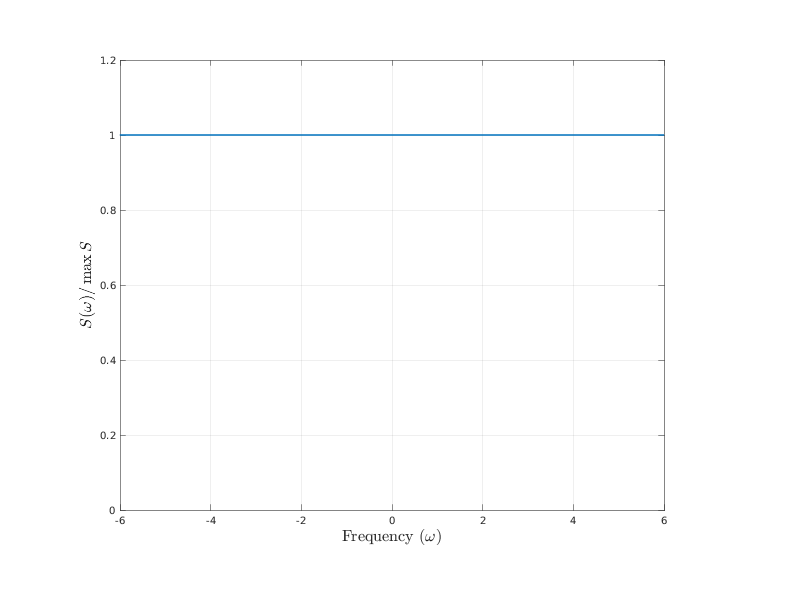}
      \end{subfigure}\hfill
      \begin{subfigure}[b]{0.48\linewidth}
          \centering
          \includegraphics[width=\linewidth]{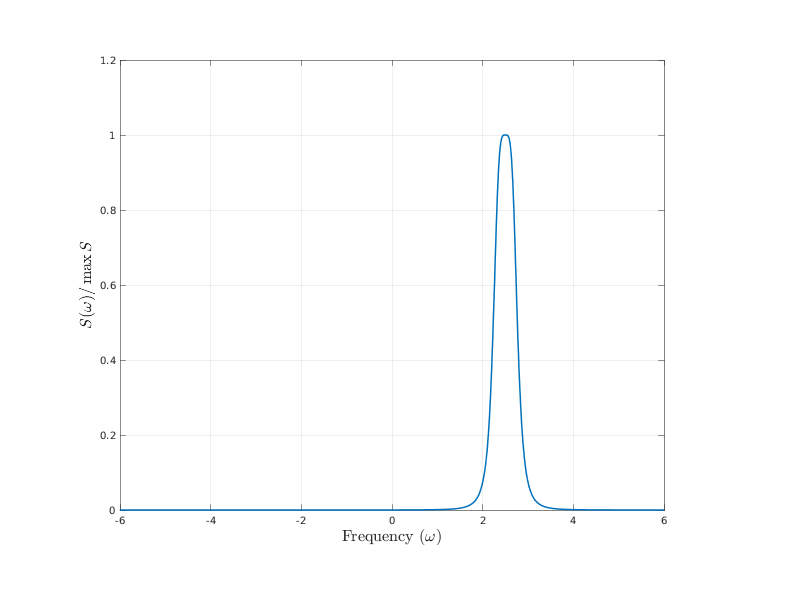}
      \end{subfigure}

\caption{Spectrogram and zeros of the Gaussian function $\exp(-\pi  t^2)$ embedded in white (left) and colored (right) noise, together with the PSD of the noise (below). In the white case, zeros populate the plane almost uniformly outside the central circular region occupied by the signal. In the colored case, the spectrogram energy is in addition concentrated around a frequency band centered at $\frequency=2.5$ (peak of PSD). Away from these regions, zeros are distributed almost uniformly.}
		\label{fig:intro}        
\end{figure}

As a second contribution, we analyze the first intensity of the STFT zeros and validate a common heuristic in the field. As formulated by Flandrin in \cite[Section 15.6]{flandrin2018explorations}, the distribution of spectrogram zeros under colored noise is expected to be similar to the one corresponding to white noise as long as the noise spectrum varies slowly,
because the effect of color is approximately that of a multiplicative factor on the spectrogram. This heuristic is important because it dispels concerns that zero-filtering techniques are overly adapted to whiteness and that they may fail under slightly more realistic setups. To formalize it, we introduce a model for slowly varying power spectra and quantify the deviation of the first intensity of STFT zeros from the white-noise baseline. We also provide  numerical validation of the analytic expressions and propose an algorithm to recover (a smoothed version of) the noise PSD directly from the observed zero set.
       
    \subsection{Main results} 
	Our first contribution is a formula for the first intensity of the zeros of the STFT for signals degraded by colored noise. Throughout the manuscript, we consider the STFT \eqref{eq:stft_def} with respect to the Gaussian window $\window(t) \coloneqq 2^{1/4} e^{-\pi t^2}$. This allows us to extend \eqref{eq:stft_def} to input signals $f$ that are tempered distributions. 
    We shall assume the following signal model:
	\begin{equation*}
		f = f_1 + \cn.
	\end{equation*} 
	Here, the deterministic component $f_1$ belongs to the \emph{modulation space} $M^\infty(\bR)$, that is, the space of all tempered distributions whose STFT is globally bounded:
	\begin{equation*}
		M^\infty(\bR) = \Big\{ f \in \rdf{\bR} : \|f\|_{M^\infty} = \sup_{\tau,\, \omega \, \in \, \mathbb{R}} |\stft{\window}{f}(\tau, \omega)| < \infty \Big\}.
	\end{equation*}
    This class contains all square-integrable functions and also all distributions commonly used in signal processing (such as Dirac deltas and periodizations and Fourier transforms thereof) \cite{benyimodulation}. The stochastic component, $\cn$, represents stationary complex Gaussian noise whose PSD $\psd$ is a measurable function of polynomial growth; see Section \ref{sec_procs} for more details.
    
    Our first result 
    concerns the spectrogram 
    \begin{equation}
    \text{Spec}_f(\tau, \omega) = |\stft{\window}{f}(\tau, \omega)|^2, \qquad (\tau, \omega) \in \mathbb{R}^2
    \end{equation}
    and describes the first order statistics of its zeros
    within a given Borel test set $\Omega \subset \mathbb{R}^2$:
    \begin{equation}\label{eq_nspec}
    	N_{\text{Spec}_f}(\Omega) = \#\big\{(\tau, \omega) \in \Omega \,:\, \text{Spec}_f(\tau, \omega) = 0\big\}.
    \end{equation}
    The formula involves two important quantities: the \emph{smoothed} PSD
\begin{equation}\label{eq_smoothed_psd}
    \smoothedpsd{S}(\frequency) \coloneqq \int_{-\infty}^{\infty} e^{-2 \pi (t - \frequency)^{2}} \psd(t) \, dt,
    \qquad \frequency \in \mathbb{R},
\end{equation}
which is the convolution of the PSD $S$ with a Gaussian function, and the \emph{time-frequency signal-to-noise-ratio}
\begin{equation}\label{eq_tfSNR}
    \tfsnr(\tau,\omega)
    =\frac{\text{Spec}_{f_1}(\tau, \omega)}{\smoothedpsd{S}(\omega)}, \qquad
    (\tau, \omega) \in \mathbb{R}^2,
\end{equation}
which measures the relation between the spectrogram and the noise at each time-frequency point. (Since $S$ is not identically zero, $\smoothedpsd{S}$ is never zero and $\tfsnr(\tau,\omega)$ is well-defined.)
	
	\begin{thm}[Expected zeros of the STFT of colored noise]\label{thm:first_intensity_stft}
	Let $f_1 \in M^\infty(\bR)$ and $\cn$ be a stationary, circularly symmetric generalized complex Gaussian process, whose PSD $\psd$ is a
    non-zero function with polynomial growth. Denote by $\smoothedpsd{S}$ the smoothed PSD \eqref{eq_smoothed_psd}.
    
    Let $f = f_1 + \cn$, consider the zero counting statistic \eqref{eq_nspec} and set    \begin{equation}\label{eq:first_point_intensity_spectrogram}
		\rhoTF(\tau, \omega) := \big[ 1 + \tfrac{1}{4\pi} (\log \smoothedpsd{S})''(\omega)+ \tfrac{1}{\sqrt{2}} 
        \widetilde{\tfsnr}(\tau, \omega)
        \big] \exp\big( -\tfrac{1}{\sqrt{2}}\tfsnr(\tau,\omega)\big), \qquad
        (\tau, \omega) \in \mathbb{R}^2,
	\end{equation}
	where
    	\begin{equation}
        \widetilde{\tfsnr}(\tau, \omega) =
\tfrac{1}{4 \pi}        
        \tfrac{\Delta\text{Spec}_{f_1}(\tau, \omega)}{ \smoothedpsd{S}(\omega)}- \tfrac{1}{2\pi}(\log \smoothedpsd{S})'(\omega) 
        \cdot
        \tfrac{\tfrac{\partial}{\partial \omega} \text{Spec}_{f_1}(\tau, \omega)}{\smoothedpsd{S}(\omega)} + \left(1+\tfrac{1}{4\pi} \cdot \big[(\log \smoothedpsd{S})'(\omega)\big]^2 \right) \cdot \tfsnr(\tau, \omega),
	\end{equation}
	and $\Delta$ is the Laplacian with respect to $(\tau, \omega)$.    
    Then, for any Borel set $\Omega \subset \mathbb{R}^2$:
	\begin{equation}\label{eq_fpi}
		\E{N_{\text{Spec}_f}(\Omega)} = \int_{\Omega} \rhoTF(\tau, \omega) \, d\tau d\omega.
	\end{equation}	
	\end{thm}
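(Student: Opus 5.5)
Our plan is to reduce to the zeros of a Gaussian analytic function and apply the Edelman--Kostlan / Kac--Rice formula for the first intensity of zeros of a non-centered Gaussian analytic function.

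\textbf{Step 1: Bargmann transform.} For the Gaussian window, the STFT factorizes as
\[
\stft{\window}{f}(\tau,-\omega)=e^{-\pi i \tau\omega}\, e^{-\pi|z|^2/2}\,\bargmann{f}(z),\qquad z=\tau+i\omega,
\]
up to the usual conjugation conventions, with $\bargmann{f}$ entire. Since the prefactors never vanish, the zeros of $\text{Spec}_f$ coincide with the zeros of the random entire function
\[
F(z)=\bargmann{f_1}(z)+\bargmann{\cn}(z),
\]
after the reflection $\omega\mapsto-\omega$, which we track throughout. The deterministic part $\bargmann{f_1}$ is entire because $f_1\in\rdf{\bR}$. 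The random part is a circularly symmetric Gaussian analytic function, since $\cn$ is a generalized Gaussian process and $\bargmann{}$ is a continuous linear functional evaluation; polynomial growth of $\psd$ guarantees the pairing with the Gaussian window is well defined.

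\textbf{Step 2: Covariance kernel.} Using stationarity, we compute
\[
\bE[\stft{\window}{\cn}(\tau,\omega)\overline{\stft{\window}{\cn}(\tau',\omega')}]
=\int \psd(t)\,\widehat{\window}(t-\omega)\overline{\widehat{\window}(t-\omega')}\,e^{2\pi i t(\tau'-\tau)}\,dt .
\]
On the diagonal this equals $\int \psd(t)\,2^{1/2}e^{-2\pi(t-\omega)^2}\,dt$, which is $\smoothedpsd{S}(\omega)$ up to the normalization constant. Consequently the variance of $F$ is
\[
K(z,z)=e^{\pi|z|^2}\,\smoothedpsd{S}(\omega).
\]
This variance is positive, so $F$ is nondegenerate at every point.

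\textbf{Step 3: Intensity of a non-centered GAF.} For a GAF $F=m+G$ with covariance $K$, the first intensity of zeros is
\[
\rho=\frac{1}{\pi}\,\partial_z\partial_{\bar z}\Big[\log K(z,z)\Big]\,e^{-|m|^2/K}
\;+\;\frac{1}{\pi}\,\frac{|\partial m-m\,\partial\log K|^2-\dots}{K}\,e^{-|m|^2/K}.
\]
Equivalently, and in the form we would use, it is Kac--Rice via Edelman--Kostlan:
\[
\rho(z)=\frac{1}{\pi}\,\partial\bar\partial\Big(\log K(z,z)+\Phi\big(|m|^2/K\big)\Big)\ \text{ with }\ \Phi(s)=\dots,
\]
or more concretely
\[
\rho=\frac{1}{\pi}\,\bE\big[|F'(z)|^2\,\big|\,F(z)=0\big]\,p_{F(z)}(0),
\]
which we compute via Gaussian conditioning. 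This is a known formula for Gaussian analytic functions with mean, in the literature on spectrogram zeros of signals in white noise, and we may cite it. The result has the shape
\[
\big[\Delta\log\sigma^2/(4\pi)+\text{terms in } m\big]\,e^{-|m|^2/\sigma^2}.
\]

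\textbf{Step 4: Translate back.} We substitute $\log K=\pi|z|^2+\log\smoothedpsd{S}(\omega)$. Its Laplacian divided by $4\pi$ gives
\[
1+\tfrac{1}{4\pi}(\log\smoothedpsd{S})''(\omega),
\]
where the $1$ comes from $\Delta(\pi|z|^2)=4\pi$. The ratio $|m|^2/K$ equals $\text{Spec}_{f_1}/\smoothedpsd{S}=\tfsnr$, after accounting for the $\sqrt2$ normalization of $\smoothedpsd{S}$ relative to the true variance; this produces the factor $1/\sqrt2$ in the exponent. The signal-dependent prefactor comes from expressing
\[
|\partial m - m\,\partial\log K|^2/K
\]
in terms of $\text{Spec}_{f_1}$. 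Since $m$ is holomorphic, we use $\Delta|m|^2=4|m'|^2$ and $\partial_\omega|m|^2$ to rewrite this through $\Delta\text{Spec}_{f_1}$, $\partial_\omega\text{Spec}_{f_1}$ and $(\log\smoothedpsd{S})'$, which yields $\widetilde{\tfsnr}$. The Gaussian weight $e^{-\pi|z|^2}$ conjugations must be carried carefully, since $\text{Spec}_{f_1}=e^{-\pi|z|^2}|\bargmann{f_1}|^2$. Finally, $\E{N(\Omega)}=\int_\Omega\rho$ is the Kac--Rice conclusion, valid because the process is nondegenerate and a.s. has isolated simple zeros.

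The main obstacle is Step 4: the bookkeeping that converts the holomorphic-derivative expression into the stated real-variable form involving $\Delta\text{Spec}_{f_1}$ and $(\log\smoothedpsd{S})'$. In particular, the cross term $-\tfrac{1}{2\pi}(\log\smoothedpsd{S})'\,\partial_\omega\text{Spec}_{f_1}$ must be shown to arise correctly, together with the matching normalization constants ($\sqrt2$ and sign conventions for $\omega$).
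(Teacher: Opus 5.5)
Your proposal is correct and is essentially the paper's proof: pass to the Bargmann transform, apply Kac--Rice with Gaussian regression on $(F,\partial F)$, and convert back via $e^{-\pi|z|^2}|m'-\pi\bar z m|^2=\tfrac14\Delta G+\pi G$ with $G=e^{-\pi|z|^2}|m|^2$, which does produce $\widetilde{\tfsnr}$ including the cross term. Your unscaled normalization avoids the paper's Jacobian factor $\pi$, and writing the conditional variance as $K\,\partial\bar\partial\log K$ from the diagonal $K(z,z)=\sqrt2\,e^{\pi|z|^2}\smoothedpsd{S}$ streamlines the paper's explicit computation of the covariances of $(F_0,\partial F_0)$; when writing it out, drop the spurious $\frac1\pi$ in your display $\rho=\frac1\pi\mathbb{E}[\,|F'|^2\mid F=0\,]\,p_{F(z)}(0)$ (the correct intensity is $\frac1\pi\bigl(\partial\bar\partial\log K+|m'-m\,\partial\log K|^2/K\bigr)e^{-|m|^2/K}$, with no further correction term), keep the $\sqrt2$ in $K$, and justify a.s.\ simplicity of zeros by a Bulinskaya-type argument as the paper does.
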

In the jargon of point-processes, \eqref{eq_fpi} means that $\rhoTF$ is the \emph{first intensity function} of the zero set of $\text{Spec}_f$. 
Theorem \ref{thm:first_intensity_stft} is proved in Section \ref{sec:zerosstft}; we make a few remarks.

$\bullet$ Theorem \ref{thm:first_intensity_stft} shows that, in time-frequency regions with high SNR, zeros are unlikely to occur, due to the exponential factor in \eqref{eq:first_point_intensity_spectrogram}. 

$\bullet$ For white noise, $S \equiv 1$, Theorem \ref{thm:first_intensity_stft} follows from 
\cite[Proposition 3.4]{efficient}. For general noise, the most interesting new element is the factor $(\log \smoothedpsd{S})'(\omega)$, which
vanishes exactly when the noise $\cn$ is white, but may have a significant impact on $\rhoTF(\tau, \omega)$ for colored noise; see Figure~\ref{fig:intro}. The fact that \eqref{eq:first_point_intensity_spectrogram} involves $\smoothedpsd{S}$ rather than $S$ is natural, since the signal $f$ is filtered with a Gaussian window.

$\bullet$ The term $\widetilde{\tfsnr}(\tau, \omega)$ is \emph{SNR-like}, as it involves ratios between derivatives of the spectrogram and the smoothed PSD.

In Sections \ref{sec:simulations} and \ref{sec:numericalexperiments} we validate \eqref{eq_fpi} numerically and illustrate some of the previous remarks. In addition, we shall exploit \eqref{eq:first_point_intensity_spectrogram} as a means to \emph{estimate} $\smoothedpsd{S}$ from the spectrogram zeros.

\bigskip

When the input signal is just noise,
$f_1\equiv 0$, the first intensity \eqref{eq:first_point_intensity_spectrogram} reduces to
\begin{equation}\label{eq_intro_rho_spec_purenoise}
    	\rhoTF(\tau, \omega) = 1 + \tfrac{1}{4\pi}(\log \smoothedpsd{S})''(\omega), \qquad (\tau, \omega) \in \bR^2.
    \end{equation}
By analyzing this expression, we can offer a formalization of the slowly-varying noise spectra heuristic \cite[Section 15.6]{flandrin2018explorations}, which asserts that 
spectrogram-zero-filtering algorithms designed for white noise can be applied more broadly. Indeed, our second result shows that for noise with mildly varying PSD, the first intensity of the STFT zeros  converges asymptotically to $1$, which is the default benchmark used when computing empirical statistics
\cite{flandrin2015time, flandrinsilence,
    bardeonzeros, bardetf,
    ghosh2022signal, miramont2024unsupervised, miramont2025filtering}.

 	\begin{thm}\label{thm:proplimit}
		Let $\cn$ be a stationary, circularly symmetric generalized complex Gaussian process with PSD $\psd \in C^2(\bR) \cap L^{\infty}(\bR) \setminus\{0\}$. Suppose that there exist $\gamma\in(0,2)$ and $\alpha,\beta>0$ such that 
        \begin{equation}\label{eq_intro_sub}
        \psd(\frequency) \geq \alpha e^{-\beta\, |\frequency|^\gamma}, \qquad \frequency\in\bR.
        \end{equation}        
		and that $\psd'(\frequency)$ and $\psd''(\frequency)$ are asymptotically dominated by ${\psd(\frequency)}$ in the following sense: for some $s_1, s_2, C_1, C_2 > 0$,
        \begin{equation}\label{eq_intro_slow}
        \abs{\psd^{(k)}(\frequency)}   \leq C_{k} (1+|\frequency|)^{-s_k} |\psd(\frequency)|, \qquad \frequency \in \bR, \quad k=1,2.
        \end{equation}
		Then, for any $\theta \in (0,1)$, the first intensity \eqref{eq_intro_rho_spec_purenoise} of the spectrogram zeros satisfies
		\begin{equation} \label{eq:result_decay_rho_spec}
			\big\lvert\rhoTF(\tau, \omega) - 1 \big\rvert 
			\leq 
			C \, 
			\|\psd\|_\infty^2 \, 
			(1+|\omega|)^{-\theta \, (1 - \frac{\gamma}{2}) \, \min(s_2, \, 2 s_1)}, \qquad \forall (\tau, \omega) \in \bR^2,
		\end{equation}
		for a constant $C=C(C_1,C_2,s_1, s_2,\alpha,\beta,\gamma, \theta)$.
 	\end{thm}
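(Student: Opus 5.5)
Since $f_1\equiv 0$, Theorem~\ref{thm:first_intensity_stft} gives $\rhoTF(\tau,\omega)-1=\tfrac{1}{4\pi}(\log \smoothedpsd{S})''(\omega)$. The task is therefore to bound
\[
(\log\smoothedpsd{S})''=\frac{\smoothedpsd{S}''}{\smoothedpsd{S}}-\Big(\frac{\smoothedpsd{S}'}{\smoothedpsd{S}}\Big)^2 .
\]
Since $S\in C^2$ with $S,S',S''$ of at most polynomial growth (indeed $|S^{(k)}|\le C_k\|S\|_\infty$), we may differentiate under the integral in \eqref{eq_smoothed_psd}. This gives $\smoothedpsd{S}^{(k)}(\omega)=\int e^{-2\pi(t-\omega)^2}S^{(k)}(t)\,dt$. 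It then suffices to show that
\[
|\smoothedpsd{S}^{(k)}(\omega)|\le C\,\|S\|_\infty (1+|\omega|)^{-\theta(1-\gamma/2)s_k}\,\smoothedpsd{S}(\omega)+\text{(tiny error)}, \qquad k=1,2,
\]
with the tiny error also controlled relative to $\smoothedpsd{S}(\omega)$. Squaring the $k=1$ bound produces the exponent $2s_1$, and the $k=2$ bound produces $s_2$, which yields the $\min$ in \eqref{eq:result_decay_rho_spec}. The factor $\|S\|_\infty^2$ (as opposed to $\|S\|_\infty$) is harmless room for the square term and for the ratio bounds below.

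For the derivative estimate I would split the integral at radius $R=R(\omega)=\tfrac12(1+|\omega|)^{\eta}$, with $\eta\in(0,1)$ to be chosen.

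On the near part $|t-\omega|\le R$, we have $1+|t|\ge\tfrac12(1+|\omega|)$ for $|\omega|$ large. Hypothesis \eqref{eq_intro_slow} then gives
\[
|S^{(k)}(t)|\le C(1+|\omega|)^{-s_k}S(t),
\]
so the near contribution is at most $C(1+|\omega|)^{-s_k}\smoothedpsd{S}(\omega)$, using $S\ge0$.

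On the far part $|t-\omega|>R$, I bound crudely: $|S^{(k)}|\le C_k\|S\|_\infty$. The far contribution is then at most $C\|S\|_\infty e^{-2\pi R^2}$.

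To compare the far contribution with $\smoothedpsd{S}(\omega)$, I use the lower bound \eqref{eq_intro_sub}:
\[
\smoothedpsd{S}(\omega)\ge \alpha\int_{|t-\omega|\le1}e^{-2\pi}e^{-\beta|t|^\gamma}dt\ge c\,e^{-\beta'(1+|\omega|)^{\gamma}}.
\]
The far contribution relative to $\smoothedpsd{S}$ is therefore at most
\[
C\|S\|_\infty\exp\big(\beta'(1+|\omega|)^\gamma-\tfrac{\pi}{2}(1+|\omega|)^{2\eta}\big).
\]
This decays faster than any polynomial as soon as $2\eta>\gamma$, i.e.\ $\eta>\gamma/2$.

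The main obstacle is making this tradeoff reproduce the exact exponent $\theta(1-\gamma/2)$. With the crude splitting above, the near region gives the full rate $s_k$ and the exponent would be better than claimed. The stated weaker exponent suggests the authors use \eqref{eq_intro_slow} differently, perhaps comparing $S(t)$ to $S(\omega)$ via the log-derivative bound, or losing a factor in the near region because $1+|t|$ is only comparable to $(1+|\omega|)^{1-\eta}$-type quantities when $R$ is large. Either way, the plan is to choose $\eta$ strictly between $\gamma/2$ and $1$, and to optimize in the regime where the near-region comparison degrades. Writing $\eta=\gamma/2+(1-\theta)(1-\gamma/2)$, the remaining gap is $1-\eta=\theta(1-\gamma/2)$, which produces exactly the stated exponent; $\theta<1$ is what guarantees $\eta>\gamma/2$.

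For bounded $|\omega|$, the bound follows from continuity and positivity of $\smoothedpsd{S}$, together with the same derivative estimates, absorbing everything into the constant $C$. Combining the two ratio bounds for $k=1,2$ in the displayed formula for $(\log\smoothedpsd{S})''$ gives \eqref{eq:result_decay_rho_spec}.
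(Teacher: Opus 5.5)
Your argument is correct, and it proves more than the statement, by a different route from the paper's.

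\textbf{Your route.} On $|t-\omega|\le R=\tfrac12(1+|\omega|)^{\eta}$ with $\eta\le 1$, you have $1+|t|\ge \tfrac12(1+|\omega|)$ for \emph{every} $\omega$. So the near part is at most $C_k2^{s_k}(1+|\omega|)^{-s_k}\smoothedpsd{S}(\omega)$. Divide the far part by $\smoothedpsd{S}(\omega)\ge 2\alpha e^{-2\pi}e^{-\beta(1+|\omega|)^\gamma}$; once $\gamma/2<\eta\le 1$, the ratio is at most $C_N\|S\|_\infty(1+|\omega|)^{-N}$ for any $N$. Since $\|S\|_\infty\ge S(0)\ge\alpha$, this gives $|\smoothedpsd{S}^{(k)}(\omega)|\le K\|S\|_\infty(1+|\omega|)^{-s_k}\smoothedpsd{S}(\omega)$ uniformly in $\omega$. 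Hence $|\rhoTF-1|\le C\|S\|_\infty^2(1+|\omega|)^{-\min(s_2,2s_1)}$, which implies \eqref{eq:result_decay_rho_spec} simply because $\theta(1-\gamma/2)<1$.

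\textbf{The paper's route.} It applies \eqref{eq_intro_slow} on the whole line and uses Peetre's inequality $(1+|\omega-\tau|)^{-s}\le(1+|\omega|)^{-s}(1+|\tau|)^{s}$. This reduces everything to the weighted convolution $\smoothedpsd{S}^*$ (Theorem~\ref{thm:logder_decay}). It then splits at $M=|\omega|^{\gamma'/2}$ and pays the full weight $(1+M)^s=(1+|\omega|)^{s\gamma'/2}$ on the near part. That loss is exactly where the exponent $(1-\gamma'/2)s$ comes from, and $\theta$ encodes the choice $\gamma<\gamma'<2$. Your splitting never introduces the weight. The paper's route buys a packaged one-derivative lemma. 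Yours is shorter and shows that $\theta$ and the factor $1-\gamma/2$ are artifacts of the proof; for instance, Proposition~\ref{prop:decay_polinomial} then holds with decay exponent $2$.

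\textbf{Two fixes to the write-up.}
\begin{enumerate}
\item Your final paragraph is unnecessary, and its reasoning is spurious. Nothing in your near-region estimate degrades to a rate $(1-\eta)s_k$, so the choice $\eta=\gamma/2+(1-\theta)(1-\gamma/2)$ ``produces'' nothing. Just conclude by monotonicity of $a\mapsto(1+|\omega|)^{-a}$.
\item Drop the continuity/positivity argument for bounded $|\omega|$. It would make the constant depend on $S$ itself rather than only on $(C_1,C_2,s_1,s_2,\alpha,\beta,\gamma,\theta)$. It is also not needed, since both your near-region inequality and your lower bound on $\smoothedpsd{S}$ hold for all $\omega$.
\end{enumerate}
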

Condition \eqref{eq_intro_slow} models the notion of a slowly varying spectrum. The sub-domination condition \eqref{eq_intro_sub} precludes very fast decay of the $\psd$ and is related to the fact that we only access the signal or noise after a convolution with a Gaussian, as implemented by the STFT. Thus, if $\psd$ was allowed to have Gaussian-like decay, then the smoothed PSD $\smoothedpsd{S}$ would follow the shape of the analysis window $\phi$ rather than $S$. Indeed, with the following proposition 
we show that condition \eqref{eq_intro_sub} is sharp
for \eqref{eq:result_decay_rho_spec}: for noise colored with a Gaussian PSD, the asymptotic estimate does not hold; instead, the first intensity remains strictly below $1$.

\begin{prop}\label{prop:gaussianpsd}
Let $\psd(\frequency) = e^{-b \frequency^2}$
with $b > 0$. Then the first intensity 
\eqref{eq_intro_rho_spec_purenoise}
of the zeros of the spectrogram of 
stationary, circularly symmetric generalized complex Gaussian noise with PSD $\psd$
is $\rhoTF(\tau, \omega) = \frac{2\pi}{2\pi + b}$, $(\tau, \omega) \in \bR^2$.
	\end{prop}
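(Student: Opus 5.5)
We apply Theorem \ref{thm:first_intensity_stft} with $f_1\equiv 0$. By \eqref{eq_intro_rho_spec_purenoise}, the first intensity is $1+\tfrac{1}{4\pi}(\log \smoothedpsd{S})''(\omega)$. The Gaussian PSD $\psd(\frequency)=e^{-b\frequency^2}$ is bounded, hence of polynomial growth, and it is non-zero, so the theorem applies. The whole task therefore reduces to computing the smoothed PSD \eqref{eq_smoothed_psd} in closed form and differentiating its logarithm twice.

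The smoothed PSD is a convolution of two Gaussians:
\begin{equation*}
\smoothedpsd{S}(\omega)=\int_{-\infty}^{\infty} e^{-2\pi(t-\omega)^2} e^{-b t^2}\,dt .
\end{equation*}
We complete the square in $t$. The exponent is $-(2\pi+b)t^2+4\pi\omega t-2\pi\omega^2$, and this equals
\begin{equation*}
-(2\pi+b)\Big(t-\tfrac{2\pi\omega}{2\pi+b}\Big)^2+\tfrac{4\pi^2\omega^2}{2\pi+b}-2\pi\omega^2 .
\end{equation*}
The constant terms combine to $-\tfrac{2\pi b}{2\pi+b}\,\omega^2$. Carrying out the remaining Gaussian integral in $t$ gives
\begin{equation*}
\smoothedpsd{S}(\omega)=\sqrt{\tfrac{\pi}{2\pi+b}}\;\exp\Big(-\tfrac{2\pi b}{2\pi+b}\,\omega^2\Big).
\end{equation*}

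Taking logarithms, $\log\smoothedpsd{S}(\omega)=c-\tfrac{2\pi b}{2\pi+b}\omega^2$ for a constant $c$. Hence
\begin{equation*}
(\log\smoothedpsd{S})''(\omega)=-\tfrac{4\pi b}{2\pi+b}.
\end{equation*}
Substituting into \eqref{eq_intro_rho_spec_purenoise} yields
\begin{equation*}
\rhoTF(\tau,\omega)=1-\tfrac{b}{2\pi+b}=\tfrac{2\pi}{2\pi+b}
\end{equation*}
for every $(\tau,\omega)\in\bR^2$, as claimed.

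There is no real obstacle here. The only points needing care are the completing-the-square algebra and confirming that the hypotheses of Theorem \ref{thm:first_intensity_stft} hold for this $\psd$, which they plainly do.
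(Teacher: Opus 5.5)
Your proposal is correct and follows essentially the same route as the paper: compute $\smoothedpsd{S}$ in closed form as a Gaussian convolution and plug $(\log \smoothedpsd{S})''(\omega) = -\tfrac{4\pi b}{2\pi+b}$ into \eqref{eq_intro_rho_spec_purenoise}. The only difference is that you carry out the completing-the-square explicitly and check admissibility of $\psd$, whereas the paper cites the standard Gaussian convolution formula for this step.
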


     As a direct application of Theorem \ref{thm:proplimit}, we derive the following for PSDs that are ratios of polynomials, which are often encountered in practice.
    \begin{prop}\label{prop:decay_polinomial}
Let $\cn$ be a stationary, circularly symmetric generalized complex Gaussian process with
PSD $\psd(\frequency) = (P(\frequency)/Q(\frequency))^{\alpha}$, where $\alpha > 0$ and $P$ and $Q$ are polynomials taking only positive values on $\mathbb{R}$ and $\operatorname{deg}(P) < \operatorname{deg}(Q)$. 
        
        Then for any $0 < \eta < 2$ there exists a constant
$K = K(\eta,\psd)$ such that the the first intensity \eqref{eq_intro_rho_spec_purenoise} of the spectrogram zeros satisfies        
        \begin{equation}
            \abs{\rhoTF(\tau, \omega) - 1} \leq K \, (1+|\omega|)^{-\eta},\qquad (\tau, \omega)\in\bR^2.
        \end{equation}
	\end{prop}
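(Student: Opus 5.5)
The plan is to apply Theorem \ref{thm:proplimit} directly. This requires checking its four hypotheses for $\psd=(P/Q)^\alpha$ and choosing parameters so that the exponent $\theta(1-\frac{\gamma}{2})\min(s_2,2s_1)$ exceeds any prescribed $\eta<2$.

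\textbf{Regularity and boundedness.} Since $P,Q$ are positive on $\bR$, the ratio $P/Q$ is smooth and strictly positive. Hence $\psd=(P/Q)^\alpha\in C^\infty(\bR)$ and $\psd>0$. Because $\deg P<\deg Q$, we have $P/Q\to 0$ at infinity, so $\psd$ is bounded. Thus $\psd\in C^2\cap L^\infty\setminus\{0\}$.

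\textbf{Lower bound \eqref{eq_intro_sub}.} Set $d=\deg Q-\deg P\ge 1$. Positivity and continuity give $\psd(\frequency)\ge c(1+|\frequency|)^{-\alpha d}$ for some $c>0$. Any $\gamma\in(0,2)$ and $\beta>0$ then work, since
\[
(1+|\frequency|)^{-\alpha d}\ge c' e^{-\beta|\frequency|^\gamma}.
\]
This holds because polynomial decay is slower than stretched-exponential decay, with $c'$ chosen to absorb compact sets.

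\textbf{Slow variation \eqref{eq_intro_slow}.} Write $\psd=e^{g}$ with
\[
g=\alpha(\log P-\log Q).
\]
Then $\psd'/\psd=g'=\alpha(P'/P-Q'/Q)$, a rational function that is bounded on $\bR$ (denominators positive) and is $O(|\frequency|^{-1})$ at infinity. So $s_1=1$. Similarly,
\[
\psd''/\psd=g''+(g')^2,
\]
where $g''=\alpha\big(P''/P-(P'/P)^2-Q''/Q+(Q'/Q)^2\big)=O(|\frequency|^{-2})$ and $(g')^2=O(|\frequency|^{-2})$. So $s_2=2$. Both constants $C_1,C_2$ are finite because these rational functions are continuous on $\bR$ and decay at infinity.

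\textbf{Conclusion.} With $s_1=1$ and $s_2=2$ we get $\min(s_2,2s_1)=2$. Theorem \ref{thm:proplimit} then gives
\[
\abs{\rhoTF-1}\le C\|\psd\|_\infty^2(1+|\omega|)^{-2\theta(1-\gamma/2)}.
\]
Given $\eta<2$, choose $\theta\in(0,1)$ close to $1$ and $\gamma\in(0,2)$ close to $0$ so that $2\theta(1-\gamma/2)\ge\eta$. Since $(1+|\omega|)\ge1$, the bound with a larger exponent implies the one with exponent $\eta$. Taking $K=C\|\psd\|_\infty^2$, with $C$ depending on the chosen parameters and hence on $\eta$ and $\psd$, finishes the proof.

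\textbf{Main point of care.} The only delicate step is the uniform lower bound for $\psd$ and the global (not just asymptotic) validity of \eqref{eq_intro_slow}. Both are handled by positivity of $P,Q$ together with compactness on bounded intervals.
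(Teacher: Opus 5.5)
Your proposal is correct and follows essentially the same route as the paper. You verify the hypotheses of Theorem~\ref{thm:proplimit} with $s_1=1$ and $s_2=2$, via $\psd'/\psd=\alpha(P'/P-Q'/Q)$ and $\psd''/\psd=(\psd'/\psd)'+(\psd'/\psd)^2$. You get the sub-Gaussian lower bound from positivity and the behaviour at infinity, and you then tune $\theta,\gamma$ so that $\theta(2-\gamma)$ reaches $\eta$. The remaining differences are cosmetic: you pass through an intermediate polynomial lower bound and take $\theta(2-\gamma)\ge\eta$ plus monotonicity in the exponent, whereas the paper chooses $\gamma=\frac{2-\eta}{2}$ and $\theta=\frac{2\eta}{2+\eta}$ to hit $\eta$ exactly.
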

    
    Finally, in Sections \ref{sec:simulations} and \ref{sec:numericalexperiments} we briefly consider the inverse problem of learning noise statistics from spectrogram zeros. This is done by leveraging \eqref{eq_intro_rho_spec_purenoise} to infer the smoothed PSD from empirical zero counts, exploiting the fact that time-frequency sparse signals have a moderate impact on this count at large scales. The corresponding code is openly available at \cite{code}.
    
    \subsection{Organization}
    The rest of the article is organized as follows. Section \ref{sec:preliminaries} provides notation and background on generalized random processes. In Section \ref{sec:bargmann}, we study the zeros of the Bargmann transform, which is a rescaled and weighted version of the Gabor transform that yields an analytic function. Theorem \ref{thm:first_intensity_stft} is proved in Section \ref{sec:zerosstft}. Section \ref{sec:zeromeananalysis} focuses on the asymptotic behavior of the zeros of colored noise, presenting the proofs of Theorem \ref{thm:proplimit}, its application to polynomial decay in Proposition \ref{prop:decay_polinomial}, and the calculation for Gaussian PSDs in Proposition \ref{prop:gaussianpsd}. Section \ref{sec:simulations} describes our numerical framework: the algorithmic recovery of the smoothed PSD $\smoothedpsd{S}$ from the zero set, the definition of the discrete STFT, and the experimental setup. Finally, Section \ref{sec:numericalexperiments} presents the results of our numerical experiments. 
    
\section{Preliminaries}\label{sec:preliminaries}
    
	\subsection{Notation} 
    We write $f(x) \lesssim g(x)$ if there exists a constant $C>0$ such that $f(x) \leq C g(x)$ for all $x$ in the relevant domain. Furthermore, the big-O notation $f(x) = \mathcal{O}(g(x))$ as $|x| \to \infty$ means that $|f(x)| \leq M |g(x)|$ for some constants $M$ and $x_0$, and all $|x| \geq x_0$.
    
    The Schwartz space is $\rd{\bR}$. The Fourier transform of $g\in\lone{\bR}$ is
	$\fourier{g} (\frequency) \coloneqq \int_{\bR} g(t) e^{-2\pi i t \frequency} \, dt$. The short-time Fourier transform (STFT) of $g\in L_{loc}^1(\bR)$ with respect to the window $\psi\in\rd{\bR}$ is
	$\stft{\psi}{g}(x,y) = \ip{g}{\mo{y} \tr{x} \psi}$, where $\mo{y} g (t) \coloneqq e^{2\pi i y t} g(t)$ and $\tr{x} g (t) \coloneqq g(t - x)$
    are respectively the \emph{modulation} and \emph{translation} operators. This formula extends to a tempered distribution
    $f \in \rdf{\bR}$ in the usual way. Throughout the article, there are two distinguished functions, $\window(t) \coloneqq 2^{1/4} e^{-\pi t^2}$ and $\phi(t) \coloneqq e^{-2 \pi t^2}$.

   The Wirtinger operators applied to $f:\mathbb{C}\to\mathbb{C}$ are denoted
   \[
   \partial f(z)=
   \partial_z f(z) \coloneqq \tfrac{1}{2}(\partial_x - i\partial_y)f(z), \qquad \bar\partial f(z) =
   \partial_{\bar{z}}f(z) \coloneqq \tfrac{1}{2}(\partial_x + i\partial_y)f(z), \qquad z=x+iy.\] These satisfy $\partial_{\bar{z}} \overline{f(z)} = \overline{\partial_z f(z)}$ and, for holomorphic $f$, $\partial f$ is the usual complex derivative.

    For $f\colon \bR^{2} \to \bC$ (or $f\colon \bC \to \bC$) and a subset $\Omega \subset \bR^{2}$ (resp. $\Omega \subset \bC$), we denote the number of zeros of $f$ in $\Omega$ by \[N_{f}(\Omega) \coloneqq \#\{\zeta \in \Omega \,:\, f(\zeta) = 0\}.\]
    The differential of the Lebesgue (area) measure on $
    \mathbb{R}^2 \equiv \mathbb{C}$ is denoted $dA$.
    
	\subsection{Generalized random processes, PSD and Colored Noise}\label{sec_procs}

    In the classical theory of stochastic processes, a random process $X=(X_t)_{t \in \mathbb{R}}$ is wide-sense stationary (WSS) if its mean is constant and its autocovariance depends only on the time shift $\shift$, denoted as $\autocorr{X}(\shift)$. The \emph{power spectral density} (PSD) of $X$ is defined as the Fourier transform of this autocovariance $\psd_{X} \coloneqq \fourier{\autocorr{X}}$. Under suitable regularity assumptions, one can interpret $X$ as a random function.
    In order to consider white and colored noise, we must rely on \emph{generalized random processes}, where $X$ is interpreted as a random tempered distribution. In this setting, a zero-mean process $X$ acts on test functions $\psi \in \mathcal{S}(\bR)$ as a random functional $\langle X, \psi \rangle$. For simplicity, we shall restrict ourselves to generalized Gaussian processes.

    \begin{definition}[Colored Noise and Admissible PSD]
        A measurable function $\psd\colon\bR\to[0,\infty)$ is called an \emph{admissible PSD}
        if it is not almost everywhere equal to the zero function and it has polynomial growth, that is, there exists $n>0$ such that 
        \begin{equation*}\label{eq:polygrowth}
            \sup_{\omega\in\mathbb{R}} (1+|\omega|)^{-n} |\psd(\omega)| < \infty.
        \end{equation*} 
        
        An admissible colored noise is a generalized complex stationary Gaussian process $\cn$ that is circularly symmetric and whose PSD $\psd$ is admissible. Concretely, this means that 
        $\cn: \mathcal{S}(\mathbb{R}) \to L^2(\Omega, \mathcal{F}, P)$ is a linear map that sends complex-valued test functions to complex-valued random variables on a probability space $(\Omega, \mathcal{F}, P)$ and satisfies the following conditions:
        \begin{enumerate}
		\item (\emph{Gaussianity}) For all $\phi_1, \dots, \phi_n \in \mathcal{S}(\mathbb{R})$, the complex random vector $\big(\cn(\phi_1), \dots, \cn(\phi_n)\big)$ follows a multivariate complex circularly symmetric joint Gaussian distribution.
				
		\item (\emph{Continuity}): If $\phi_k \to 0$ in $\mathcal{S}(\mathbb{R})$, then $ \mathbb{E}\left[|\cn(\phi_k)|^2\right] \to 0$.
		
		\item (\emph{Stationarity}): For all $\psi_1,\psi_2 \in \mathcal{S}(\bR)$:\begin{equation}\label{eq:covariance_noise}
            \E{\langle\cn,\psi_1\rangle\overline{\langle\cn,\psi_2\rangle}} = \int_{\bR} \fourier{\psi_2}(\frequency) \overline{\fourier{\psi_1}(\frequency)} \psd(\frequency) \, d\frequency.
        \end{equation}  
	\end{enumerate}        
        \end{definition}
   Note that the integral in \eqref{eq:covariance_noise} is convergent because $\psd$ has polynomial growth. The circular symmetry assumption entails that the process has zero mean, $\E{\langle\cn,\psi_1\rangle}=0$, and its pseudo-covariance functional vanishes identically, $\E{\langle\cn,\psi_1\rangle\langle\cn,\psi_2\rangle} = 0$.

    \begin{remark*}
    Almost every realization of an admissible colored noise $\cn$ is a tempered distribution \cite[Chapters 3 and 4]{gelfand} and therefore $\stft{\window}{\cn}$, its STFT with respect to the Gaussian window, is a well-defined smooth complex-valued function on $\mathbb{R}^2$ \cite[Chapter 1]{benyimodulation}.
    \end{remark*}
     
	\section{Bargmann Transform of Colored Noise and its Zeros}\label{sec:bargmann}
    To translate the covariance structure of colored noise into the spatial distribution of the zeros of its STFT, we rely on the Kac-Rice formula for Gaussian random fields \cite[Theorem 6.2]{level}, \cite[Chapter 11]{adler}. Under suitable regularity and non-degeneracy conditions, this theorem establishes that the expected number of zeros of a random field in $Z:\bR^n\to\bR^n$, to be found in a Borel set $E \subseteq \bR^n$ is given by integrating its first intensity function:
	\begin{equation}\label{eq:exp_zeros}
		\rho_1(t) \coloneqq \Econd{|\operatorname{Jac} Z(t)|}{Z(t)=0}\, p_{Z(t)}(0),
	\end{equation}
	where $\operatorname{Jac} Z(t)$ is the Jacobian determinant and $p_{Z(t)}$ is the probability density function of $Z(t)$. 
	
	While our ultimate goal is to describe the zeros of the STFT, computing this Jacobian determinant directly is cumbersome. We bypass this difficulty by mapping the problem to the Bargmann transform, defined as a rescaled and exponentially weighted Gabor transform. Because this transform is analytic, its Jacobian simplifies to the squared modulus of its complex derivative. Moreover, since the transformation merely rescales the zero set, the resulting first intensity can be easily mapped back to that of the original STFT.
	
    \begin{definition}[Bargmann Transform of a distribution]
		  Let $\Psi \in \rdf{\bR}$. Let $\window(t) = 2^{1/4} e^{-\pi t^2}$. The Bargmann transform of $\Psi \in \rdf{\bR}$ is the function 
		$\bargmann{\Psi}: \mathbb{C}\to\mathbb{C}$ given by
         \begin{align}\label{eq:def_bargmann}
			\bargmann{\Psi} (z)
			& \coloneqq e^{\frac{\abs{z}^2}{2}}\, e^{- i x y} \, \bigip{\Psi}{\mo{-y/\sqrt{\pi}} \, \tr{x/\sqrt{\pi}} \, \window}, \qquad z=x+iy \in \mathbb{C}.
		\end{align}
    See \cite{bargmann1961}, \cite[Chapter 1]{folland89}, \cite[Chapter 11.2]{ftfa_gro} for a detailed exposition.
	\end{definition}

    The STFT with a Gaussian window $\window(t) = 2^{1/4} e^{-\pi t^2}$ can be rewritten in terms of the Bargmann transform. Specifically, writing $z=x+iy$, we have the relation:
    \begin{align}
        e^{-\abs{z}^2/2} \,\, e^{i x y} \,\, \bargmann{f} (z)	
        &=	\ip{f}{\mo{-y/\sqrt{\pi}} \, \tr{x/\sqrt{\pi}} \, \window} = \stft{\window}{f}(x/\sqrt{\pi}, \, -y/\sqrt{\pi}). \label{eq:bargstft}
    \end{align}
    We shall also make use of the \emph{covariant derivative} of a holomorphic function $g\colon\bC\to\bC$:
    \begin{equation}\label{eq_dbarstar}
    \bar{\partial}^{*} g (z) \coloneqq \partial_{z} g(z) - \bar{z} g(z).    
    \end{equation}
    The operator $\bar{\partial}^{*}$ is the adjoint of the derivative $\partial$ with respect to $L^2$ of the Gaussian weight and is instrumental to study sampling and interpolation problems with multiplicities \cite{bs93, esharo21}.
	
	\subsection{Intensity of zeros of the Bargmann transform of colored noise} \label{sec:bargmanntransform}
    The goal of this section is to derive the following result.	\begin{thm}\label{thm:first_intensity_bargmann}
        Let $f_1 \in M^\infty(\bR)$ and $\phi(t) = e^{-2 \pi t^2}$. Let $\cn$ be admissible colored noise with PSD $S$. Let $f = f_1 + \cn$, and $\Omega \subseteq \bC$ be a Borel set. Write $z=x + iy$,
        $\omega = -y/\sqrt{\pi}$ and $F_1(z) = \bargmann{f_1}(z)$. Let $\smoothedpsd{S} \coloneqq \phi * \psd$ be the smoothed PSD and $\sigma(z) \coloneqq 2^{1/4} \, e^{\abs{z}^2/2} \sqrt{\smoothedpsd{S}(\omega)}$ the 
        \emph{local standard deviation} of $\bargmann{f}$. Then,
        \begin{equation*}
            \E{\#\{\zeta \in \Omega \,:\, \bargmann{f}(\zeta)=0\}} = \int_{\Omega} \rhoBargmann(z) \, dA(z),
        \end{equation*}
        where
        \begin{equation}
        \label{eq:rho1}
            \rhoBargmann(z) 
            \coloneqq
            \bigg(
                \frac{1}{\pi} 
                + \frac{(\log \smoothedpsd{S})''(\omega)}{4\pi^2}
                +
                \frac{1}{\pi \sigma^2(z)}
                \Bigabs{
                    \bar{\partial}^{*} F_1 (z) - \frac{i}{2\sqrt{\pi}} (\log \smoothedpsd{S})'(\omega) \, F_1(z)
                }^{2}
            \bigg)
            \exp\bigg( - \frac{\abs{F_1(z)}^2}{\sigma^2(z)} \bigg)
        \end{equation}
    and we used the  covariant derivative \eqref{eq_dbarstar}.
    \end{thm}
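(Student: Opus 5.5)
We apply the Kac--Rice formula to the random entire function $G \coloneqq \bargmann{f} = F_1 + \bargmann{\cn}$. Since $G$ is holomorphic, its Jacobian determinant (as a map $\bR^2\to\bR^2$) is $|G'|^2$, so
\[
\rhoBargmann(z) = \Econd{|\partial G(z)|^2}{G(z)=0}\, p_{G(z)}(0).
\]
Everything therefore reduces to computing the joint law of the circularly symmetric complex Gaussian pair $(X,Y) = (\bargmann{\cn}(z), \partial\bargmann{\cn}(z))$, whose mean is $(F_1(z), F_1'(z))$ once the signal is included. The plan has four steps: compute the covariance kernel, extract the variances of $X$ and $Y$ and their covariance, condition, and then justify regularity and non-degeneracy.

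\textbf{Covariance kernel.} First we compute $K(z,w) \coloneqq \E{\bargmann{\cn}(z)\overline{\bargmann{\cn}(w)}}$ from \eqref{eq:covariance_noise}. The Fourier transform of the time-frequency shift $\mo{-y/\sqrt{\pi}}\tr{x/\sqrt{\pi}}\window$ is a Gaussian centered at $-y/\sqrt{\pi}$, multiplied by a phase in $x$. Hence
\[
K(z,w) = e^{(|z|^2+|w|^2)/2} e^{-i(x_zy_z - x_wy_w)} \int \widehat{\window}(\xi+y_z/\sqrt{\pi})\,\overline{\widehat{\window}(\xi+y_w/\sqrt{\pi})}\, e^{2\pi i \xi (x_w-x_z)/\sqrt{\pi}}\, \psd(\xi)\, d\xi .
\]
On the diagonal, $|\widehat\window(\xi-\omega)|^2 = \sqrt{2}\,e^{-2\pi(\xi-\omega)^2}$, which gives $\E{|X|^2} = \sqrt2\, e^{|z|^2}\smoothedpsd{S}(\omega) = \sigma^2(z)$. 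This step identifies the local standard deviation.

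\textbf{Derivative moments.} We differentiate the kernel, using $\E{Y\bar X} = \partial_z K(z,w)|_{w=z}$ and $\E{|Y|^2} = \partial_z\partial_{\bar w}K(z,w)|_{w=z}$. It is convenient to write $\bargmann{\cn}(z)=e^{|z|^2/2}\,\cdot(\text{STFT-type term})$ and to express the derivatives through $\partial_z = \tfrac12(\partial_x - i\partial_y)$. Derivatives in $y$ act on $\smoothedpsd{S}(\omega)$ through $\omega=-y/\sqrt\pi$; derivatives in $x$ produce the first and second moments of $\xi$ against the Gaussian-weighted $\psd$. Those moments are again expressible through $\smoothedpsd{S}$, $\smoothedpsd{S}'$ and $\smoothedpsd{S}''$, because $(\xi-\omega)e^{-2\pi(\xi-\omega)^2}$ is a derivative in $\omega$ of the kernel.

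The expected outcome is
\[
\E{Y\bar X} = \sigma^2\Big(\bar z + \tfrac{i}{2\sqrt\pi}(\log\smoothedpsd{S})'\Big),
\]
and a formula for $\E{|Y|^2}$ such that the conditional variance is
\[
\E{|Y|^2} - \frac{|\E{Y\bar X}|^2}{\sigma^2} = \sigma^2\Big(1+\tfrac{1}{4\pi}(\log\smoothedpsd{S})''\Big).
\]
This bookkeeping, namely getting the signs and constants of the $\omega$-derivatives right so that the $(\log\smoothedpsd{S})'^2$ terms cancel in the conditional variance, is the main computational obstacle. I would organize it by writing $\smoothedpsd{S}''/\smoothedpsd{S} = (\log\smoothedpsd{S})'' + ((\log\smoothedpsd{S})')^2$.

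\textbf{Conditioning.} Gaussian regression gives
\[
\Econd{Y}{X=-F_1} = -\frac{\E{Y\bar X}}{\sigma^2}\,F_1 .
\]
The conditional mean of $\partial G$ is therefore
\[
F_1' - \bar z F_1 - \tfrac{i}{2\sqrt\pi}(\log\smoothedpsd{S})' F_1 = \bar\partial^* F_1 - \tfrac{i}{2\sqrt\pi}(\log\smoothedpsd{S})'F_1 .
\]
For a complex Gaussian, $\E{|Y|^2 \mid \cdot}$ equals the conditional variance plus the squared modulus of the conditional mean. The density at $0$ is $p_{G(z)}(0) = \frac{1}{\pi\sigma^2}e^{-|F_1|^2/\sigma^2}$. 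Multiplying these together yields \eqref{eq:rho1}.

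\textbf{Regularity and non-degeneracy.} Finally we check the hypotheses of the Kac--Rice theorem \cite[Theorem 6.2]{level}. Almost surely $\bargmann{\cn}$ is entire, by the Remark in Section~\ref{sec_procs} together with the holomorphy of the Bargmann transform, and $F_1$ is entire as well. Since $\psd$ is nonzero, $\sigma^2>0$. Since $\psd$ is not a point mass, the Cauchy--Schwarz inequality in $L^2(\psd\,d\xi)$ is strict, so the covariance of $(X,Y)$ is non-degenerate. Local integrability holds because $\psd$ has polynomial growth. Alternatively, one can invoke the standard Kac--Rice formula for Gaussian analytic functions with nonzero mean.
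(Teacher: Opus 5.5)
Your architecture is the paper's: Kac--Rice with Jacobian $|\partial G|^2$, then Gaussian regression on the circularly symmetric pair $(X,Y)$. The moments you announce agree with \eqref{eq:covariances}, so your conditional mean, conditional variance, $p_{G(z)}(0)$ and final formula are all correct.

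\textbf{The kernel is wrong.} The one computation you actually display is incorrect, and the derivative moments are asserted as an ``expected outcome'' rather than derived from it. The total phase of your integrand is the complex conjugate of the correct one. Two things go wrong: \eqref{eq:covariance_noise} pairs $\widehat{\psi_2}\,\overline{\widehat{\psi_1}}$ with $\psi_1$ attached to $z$, and $\mathcal{F}(\mathcal{M}_a\mathcal{T}_b\varphi)=\mathcal{T}_a\mathcal{M}_{-b}\varphi$ carries a constant phase $e^{2\pi i ab}$ that your formula omits. Expanding the exponents, your displayed kernel equals $\sqrt2\,e^{\frac12(\bar z^2+w^2)}\int e^{-2\pi\xi^2}e^{-2i\sqrt\pi\,\xi(\bar z-w)}S(\xi)\,d\xi$. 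This is the true kernel evaluated at $(w,z)$, and it is antiholomorphic in $z$. Consequently $\partial_zK|_{w=z}$ and $\partial_z\partial_{\bar w}K|_{w=z}$, computed from it as you plan, vanish identically, giving $\E{Y\bar X}=\E{|Y|^2}=0$.

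\textbf{The fix.} The correct kernel is the one in Proposition~\ref{prop:variancebargmann}, $\sqrt2\,e^{\frac12(z^2+\bar w^2)}\int e^{-2\pi t^2}e^{2i\sqrt\pi\,t(z-\bar w)}S(t)\,dt$. With it, the bookkeeping you call the main obstacle disappears. Since $K$ is holomorphic in $z$ and antiholomorphic in $w$, the Wirtinger chain rule gives $\E{Y\bar X}=\partial_z\sigma^2$ and $\E{|Y|^2}=\partial_z\partial_{\bar z}\sigma^2$, where $\sigma^2=\sqrt2\,e^{|z|^2}S_\phi(-y/\sqrt\pi)$. Differentiating yields exactly \eqref{eq:covariances}. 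The conditional variance then follows in one line: $\E{|Y|^2}-|\E{Y\bar X}|^2/\sigma^2=\sigma^2\,\partial\bar\partial\log\sigma^2=\sigma^2\bigl(1+\tfrac{1}{4\pi}(\log S_\phi)''(\omega)\bigr)$. This also makes precise your remark that ``derivatives in $y$ act on $S_\phi(\omega)$'', which holds only after restricting to the diagonal.

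\textbf{The regularity check is incomplete.} Your regularity paragraph does not verify the hypothesis of \cite[Theorem 6.2]{level} that almost surely no zero of $G$ is also a zero of $\partial G$. Non-degeneracy of $G(z)$, or of the pair $(G(z),\partial G(z))$, is not by itself that hypothesis.

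\begin{itemize}
\item The paper obtains it from \cite[Proposition 6.5]{level}. That result needs only smooth paths and a density of $G(z)$ bounded near $0$, locally uniformly in $z$. This holds because $F_1$ is locally bounded and $\sigma^2$ is locally bounded below.
\item Your strict Cauchy--Schwarz observation can serve instead, and it is correct. It makes the covariance of $(G(z),\partial G(z))\in\mathbb{C}^2$ positive definite, continuously in $z$. The smooth field $z\mapsto(G(z),\partial G(z))$ from $\mathbb{R}^2$ to $\mathbb{R}^4$ then has locally bounded density at the origin. By a Bulinskaya-type covering argument it almost surely never hits the origin. As a bonus it shows $1+\frac{1}{4\pi}(\log S_\phi)''>0$.
\end{itemize}

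You need to state this step explicitly rather than leave it implicit.
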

For the remainder of the section, we assume that $f$, $f_1$ and $\cn$ are as in Theorem \ref{thm:first_intensity_bargmann}. As a first step towards the proof of Theorem~\ref{thm:first_intensity_bargmann}, we shall compute the covariance structure of the random vector $(F_0, \partial F_0)$, where $F_0 = \bargmann{\cn}$. 
   	
	\begin{prop}\label{prop:variancebargmann}
		Let $z=x+iy$, $w=u+iv$ and $F_0 = \bargmann{\cn}$. Then
		\begin{equation}\label{eq:covariance_bargmann}
			\begin{aligned}
				\E{F_0 (z) \overline{F_0 (w)} } 
                &= 
                \sqrt{2} \, e^{\frac{1}{2} ( z^2 + \bar{w}^2)} \int_{-\infty}^{\infty} e^{-2\pi t^2} \, e^{2i\sqrt{\pi}t(z-\bar{w})} \psd(t) \, dt.
			\end{aligned}
		\end{equation}
	\end{prop}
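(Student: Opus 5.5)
Since $F_0(z)$ is, by definition \eqref{eq:def_bargmann}, a multiple of the pairing of $\cn$ with a time-frequency shifted Gaussian, the plan is to apply the stationarity identity \eqref{eq:covariance_noise} to the two test functions $\psi_z \coloneqq \mo{-y/\sqrt{\pi}}\tr{x/\sqrt{\pi}}\window$ and $\psi_w \coloneqq \mo{-v/\sqrt{\pi}}\tr{u/\sqrt{\pi}}\window$, and then simplify the resulting integral into the stated holomorphic/antiholomorphic form. Concretely, we have
\begin{equation*}
\E{F_0(z)\overline{F_0(w)}} = e^{\frac{|z|^2+|w|^2}{2}}\, e^{-ixy}\, e^{iuv}\, \E{\ip{\cn}{\psi_z}\overline{\ip{\cn}{\psi_w}}},
\end{equation*}
where the pairing $\ip{\cn}{\psi}$ is conjugate-linear in $\psi$ as in the STFT. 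I will fix the convention so that \eqref{eq:covariance_noise} applies (the pairing in the definition of the Bargmann transform is the sesquilinear one, so I will check that the roles of $\psi_1,\psi_2$ produce $\overline{\fourier{\psi_z}}\fourier{\psi_w}$ or its conjugate, and adjust accordingly).

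The next step is to compute the Fourier transform of a time-frequency shifted Gaussian. Since $\fourier{\window}=\window$, we have $\fourier{\mo{b}\tr{a}\window}(t) = e^{-2\pi i a(t-b)}\window(t-b)$ with $a=x/\sqrt{\pi}$ and $b=-y/\sqrt{\pi}$. The product $\fourier{\psi_w}(t)\overline{\fourier{\psi_z}(t)}$ is therefore $\sqrt{2}\, e^{-\pi(t-b)^2-\pi(t-b')^2}$ times a phase $e^{2\pi i (a-a')t}$ times constant phases. I then substitute $t\mapsto t$ and complete the square. Writing $-\pi(t+y/\sqrt{\pi})^2-\pi(t+v/\sqrt{\pi})^2 = -2\pi t^2 - 2\sqrt{\pi}t(y+v) - (y^2+v^2)/2\cdot 2$, the $t$-dependent exponent becomes $-2\pi t^2 + 2i\sqrt{\pi}t\,[(x-u) + i(y+v)]$, and $(x-u)+i(y+v) = z-\bar w$, which matches the integrand in \eqref{eq:covariance_bargmann}.

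The remaining step, and the main obstacle, is the bookkeeping of the constant prefactors. It must be checked that $e^{(|z|^2+|w|^2)/2}\,e^{-ixy+iuv}\,e^{-(y^2+v^2)}$ multiplied by the constant phases coming from the modulation (of the form $e^{\pm 2\pi i ab}$, i.e. $e^{\pm 2ixy/\pi\cdot\pi}$) collapses to $e^{(z^2+\bar w^2)/2}$. Indeed $\tfrac12(x^2+y^2) - y^2 - ixy = \tfrac12(x^2-y^2) - ixy$, which equals $\tfrac12\bar z^{\,2}$ or $\tfrac12 z^2$ depending on the sign of the phase. I will verify that the sign conventions give $z^2$ for the $z$-variable and $\bar w^2$ for the $w$-variable, fixing the orientation of the phase if the computation first yields the conjugate. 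The interchange of expectation and integration is already built into \eqref{eq:covariance_noise}, and polynomial growth of $\psd$ guarantees convergence against the Gaussian, so no further analytic justification is needed beyond this algebra.
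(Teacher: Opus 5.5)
Your proposal is correct and follows the paper's proof: apply \eqref{eq:covariance_noise} with $\psi_1=\psi_z$, $\psi_2=\psi_w$, use $\fourier{\mo{b}\tr{a}\window}(t)=e^{-2\pi i a(t-b)}\window(t-b)$, and group the exponents. The bookkeeping you deferred needs no adjustment: the constant phases are $e^{2ixy}$ from $\overline{\fourier{\psi_z}}$ and $e^{-2iuv}$ from $\fourier{\psi_w}$, so the $z$-exponent is $\tfrac12(x^2-y^2)+ixy=\tfrac12 z^2$ and the $w$-exponent is $\tfrac12(u^2-v^2)-iuv=\tfrac12\bar w^{2}$.
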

	
	\begin{proof}
		Evaluating $\bargmann{\cn}$ and expanding the expectation, we have:
		\begin{align*}
			\E{F_0 (z) \overline{F_0 (w)}} 
			&= \bE\Big[e^{\frac{\abs{z}^2}{2}}\, e^{- i x y} \, \ip{\cn}{\mo{-y/\sqrt{\pi}} \, \tr{x/\sqrt{\pi}} \, \window} \,
				\overline{e^{\frac{\abs{w}^2}{2}}\, e^{- i u v} \, \ip{\cn}{\mo{-v/\sqrt{\pi}} \tr{u/\sqrt{\pi}} \window}}\Big] \\
			&= e^{\frac{\abs{z}^2+\abs{w}^2}{2}} e^{- i (x y-uv)} \, \E{ \ip{\cn}{\mo{-y/\sqrt{\pi}} \, \tr{x/\sqrt{\pi}} \, \window} \,
				\overline{\ip{\cn}{\mo{-v/\sqrt{\pi}} \tr{u/\sqrt{\pi}} \window}}}.
		\end{align*}
		Using \eqref{eq:covariance_noise}, this becomes:
		\begin{align*}
			\E{F_0 (z) \overline{F_0 (w)}} 
			&= e^{\frac{\abs{z}^2+\abs{w}^2}{2}} e^{- i (x y-uv)} \\
			&\qquad \cdot \int_{\bR} \fourier{\mo{-v/\sqrt{\pi}} \, \tr{u/\sqrt{\pi}} \, \window}(t)\, \overline{\fourier{\mo{-y/\sqrt{\pi}} \, \tr{x/\sqrt{\pi}} \, \window}(t)} \, \psd(t)\, dt.
		\end{align*}
		Recall that for $\alpha, \beta \in \bR$ and any function $\xi$, the Fourier transform alters the time-frequency shifts as follows $\fourier{\mo{\alpha}\, \tr{\beta} \, \xi} = \tr{\alpha}\, \mo{-\beta}\, \fourier{\xi}$. Furthermore, since $\fourier{\window} = \window$, we obtain:
		\begin{align*}
			\E{F_0 (z) \overline{F_0 (w)}} 
			&= e^{\frac{\abs{z}^2+\abs{w}^2}{2}} e^{- i (x y-uv)} \\
			&\qquad \cdot \int_{\bR} (\tr{-v/\sqrt{\pi}} \, \mo{-u/\sqrt{\pi}} \, \window (t)) \overline{(\tr{-y/\sqrt{\pi}} \, \mo{-x/\sqrt{\pi}} \, \window(t))} \, \psd(t)\, dt.
		\end{align*}
		Substituting the explicit form of $\window$, we obtain:
		\begin{align*}
			 \E{F_0 (z) \overline{F_0 (w)}} = \sqrt{2} e^{\frac{\abs{z}^2+\abs{w}^2}{2}} e^{- i (x y-uv)} \cdot \int_{\bR} e^{-2 \pi i \bigl( \frac{u}{\sqrt{\pi}}(t+\frac{v}{\sqrt{\pi}}) - \frac{x}{\sqrt{\pi}}(t+\frac{y}{\sqrt{\pi}})\bigr)} 
			e^{-\pi \bigl((t+\frac{v}{\sqrt{\pi}})^2+(t+\frac{y}{\sqrt{\pi}})^2\bigr)} \psd(t)\, dt.
		\end{align*}       
       Expanding the exponents and grouping the terms that depend on $t$ yields the result.
	\end{proof}
	
	\begin{prop}[Computation of the covariance matrix] Write $z=x+iy$, let $\omega = -y/\sqrt{\pi}$ and recall that $\smoothedpsd{S} = S * \phi$, where $\phi(t) = e^{-2 \pi t^{2}}$.
		
    The covariance structure of the joint random vector $(F_0, \partial F_0)$ can be expressed in terms of the smoothed PSD and its derivatives:
    \begin{equation}\label{eq:covariances}
        \begin{aligned}
            \E{F_0 (z)\, \overline{F_0 (z)}} 
            &= \sqrt{2}\, e^{\abs{z}^2} \smoothedpsd{S}(\omega), \\
            \E{\partial_{z} F_0 (z)\, \overline{F_0 (z)}} 
            &= \sqrt{2}\, e^{|z|^2} \big( \bar{z} \, \smoothedpsd{S}(\omega) + \tfrac{i}{2\sqrt{\pi}} \, \smoothedpsd{S}'(\omega) \big), \\
            \E{\partial_{z} F_0 (z)\, \overline{\partial_z F_0 (z)}} 
            &= \sqrt{2}\, e^{|z|^2}\, \bigl( \tfrac{1}{4\pi}\, \smoothedpsd{S}''(\omega) + \omega\, \smoothedpsd{S}'(\omega) + (1 + |z|^2)\, \smoothedpsd{S}(\omega) \bigr).
        \end{aligned}
    \end{equation}
	\end{prop}
	\begin{proof}
		Note that the smoothed PSD $\smoothedpsd{S}(\omega)$ is well-defined and smooth, as the polynomial growth of $\psd$ allows us to differentiate under the integral sign. Taking derivatives with respect to $\omega$ yields:
		\begin{align}
			\smoothedpsd{S}'(\omega) &= \int_{-\infty}^{\infty} 4\pi(t-\omega) e^{-2\pi(t-\omega)^2} \psd(t) \, dt, \label{eq:deriv1_smoothed} \\
			\smoothedpsd{S}''(\omega) &= \int_{-\infty}^{\infty} \big(16\pi^2(t-\omega)^2 - 4\pi\big) e^{-2\pi(t-\omega)^2} \psd(t) \, dt. \label{eq:deriv2_smoothed}
		\end{align}
		
		The expression for $\E{F_0 (z)\, \overline{F_0 (z)}}$ follows immediately from Proposition~\ref{prop:variancebargmann} by setting $w=z$, completing squares and considering $y = -\sqrt{\pi}\omega$. 
        
        For the next term involving a derivative, we exchange the order of differentiation and the expectation, namely $\E{\partial_{z} F_0 (z)\, \overline{ F_0 (w)}} = \partial_{z} \big(\E{F_0 (z)\, \overline{F_0 (w)}}\big)$. This is allowed because the covariance of $F_0$ is smooth and $F_0$ is almost surely continuous (and thus satisfies the so-called \emph{separability condition}) \cite[Chapter 1]{level}.         
        
		The derivative operator brings down a factor of $z + 2i\sqrt{\pi}t$. We center this factor around $\omega$ by writing $t = (t-\omega) + \omega$, which yields $z + 2i\sqrt{\pi}t = z + 2i\sqrt{\pi}\omega + 2i\sqrt{\pi}(t-\omega)$.
		Since $y = -\sqrt{\pi}\omega$, we have $z + 2i\sqrt{\pi}\omega = x + iy - 2iy = \bar{z}$. We can then integrate this expanded factor against the PSD:
		\begin{align*}
			\E{\partial_{z} F_0 (z)\, \overline{F_0 (z)}}
			&= \sqrt{2} e^{|z|^2} \biggl( \bar{z} \smoothedpsd{S}(\omega) + \frac{i}{2\sqrt{\pi}} \int_{-\infty}^{\infty} 4\pi(t-\omega) e^{-2\pi (t-\omega)^2} \psd(t) \, dt \biggr).
		\end{align*}
		Applying identity \eqref{eq:deriv1_smoothed}, we immediately obtain:
		\begin{equation}
			\E{\partial_{z} F_0 (z)\, \overline{F_0 (z)}} = \sqrt{2} e^{|z|^2} \big( \bar{z} \smoothedpsd{S}(\omega) + \tfrac{i}{2\sqrt{\pi}} \smoothedpsd{S}'(\omega) \big).
		\end{equation}
		
		To compute the variance of $\partial_z F_0(z)$, we again interchange differentiation and expectation via dominated convergence, evaluating the mixed derivative at $w=z$:
		\begin{equation*}
			\E{\partial_{z} F_0 (z)\, \overline{ \partial_z F_0 (z)}} = \partial_{\overline{w}}\, \partial_{z} \bigl(\E{F_0 (z)\, \overline{F_0 (w)}}\bigr) \big|_{w=z}.
		\end{equation*}
		After taking derivatives, we obtain:
        \begin{equation*}
            \frac{\partial^2}{\partial z \partial \bar{w}} \E{F_0 (z) \overline{F_0 (w)} } = 
            \sqrt{2} \, e^{\frac{1}{2} ( z^2 + \bar{w}^2)} \int_{-\infty}^{\infty} (z + 2i\sqrt{\pi}t)(\bar{w} - 2i\sqrt{\pi}t) \, e^{-2\pi t^2} \, e^{2i\sqrt{\pi}t(z-\bar{w})} \psd(t) \, dt.
        \end{equation*}
        Evaluating this expression at the diagonal $w=z$, the polynomial factor inside the integral becomes $\abs{z + 2i\sqrt{\pi}t}^2$. By the centering trick from the previous step $z + 2i\sqrt{\pi}t = \bar{z} + 2i\sqrt{\pi}(t-\omega)$, and we expand the squared modulus:
		\begin{align*}
			\abs{\bar{z} + 2i\sqrt{\pi}(t-\omega)}^2 
			&= |z|^2 - 4\sqrt{\pi}(t-\omega)\Re(i\bar{z}) + 4\pi(t-\omega)^2.
		\end{align*}
		Noting that $\Re(i\bar{z}) = y = -\sqrt{\pi}\omega$, the linear term simplifies to $4\pi\omega(t-\omega)$. We integrate this quadratic polynomial against the PSD:
		\begin{equation*}
			\E{|\partial_z F_0(z)|^2} = \sqrt{2} e^{|z|^2} \int_{-\infty}^{\infty} \bigl( |z|^2 + 4\pi\omega(t-\omega) + 4\pi(t-\omega)^2 \bigr) e^{-2\pi(t - \omega)^2} \psd(t) \, dt.
		\end{equation*}
		Substituting the identities \eqref{eq:deriv1_smoothed} and \eqref{eq:deriv2_smoothed} into the corresponding terms yields:
		\begin{align*}
			\E{|\partial_z F_0(z)|^2} 
			&= \sqrt{2} e^{|z|^2} \bigl( \tfrac{1}{4\pi} \smoothedpsd{S}''(\omega) + \omega \smoothedpsd{S}'(\omega) + (1 + |z|^2) \smoothedpsd{S}(\omega) \bigr).
		\end{align*}
	\end{proof}
	\begin{prop}\label{prop:kacrice_conditions} 
    $\bargmann{f}$ is a Gaussian random function, which is almost surely entire. In addition,
		\begin{enumerate}[label=(\alph*)]
            \item\label{item_a} $\bargmann{\cn}(z)$ is circularly symmetric for all $z \in \mathbb{C}$,
			\item\label{variance} $\mathrm{Var}[
            \bargmann{\cn}(z)]=
            \mathrm{Var}[\bargmann{f}(z)]>0$ for all $z \in \mathbb{C}$,
			\item\label{multiplezeros} Almost surely:
            $\partial_z \bargmann{f}(z) \not =0$ for all $z \in \mathbb{C}$ such that $\bargmann{f}(z)=0$.
            In particular, the zeros of $\bargmann{f}$ are almost surely discrete and simple.
		\end{enumerate}
	\end{prop}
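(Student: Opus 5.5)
We first establish the Gaussian and analytic structure, then verify \ref{item_a}, \ref{variance} and \ref{multiplezeros} in turn.

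\emph{Gaussianity and analyticity.} For fixed $z$, $\bargmann{\cn}(z)$ is the deterministic weight $e^{|z|^2/2}e^{-ixy}$ times $\ip{\cn}{g_z}$ with $g_z=\mo{-y/\sqrt\pi}\tr{x/\sqrt\pi}\window\in\rd{\bR}$. The Gaussianity axiom then shows that every finite vector $(\bargmann{\cn}(z_1),\dots,\bargmann{\cn}(z_n))$ is jointly circularly symmetric complex Gaussian. Since $\bargmann{f_1}$ is deterministic, $\bargmann{f}$ is a Gaussian random function with mean $\bargmann{f_1}$.

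For analyticity, we use the Remark: almost every realization of $\cn$ is a tempered distribution. For a fixed tempered distribution $\Psi$, $\bargmann{\Psi}$ is entire by the classical theory \cite{folland89,ftfa_gro}. Indeed, $e^{|z|^2/2}e^{-ixy}\mo{-y/\sqrt\pi}\tr{x/\sqrt\pi}\window(t)$ equals, up to constants, $\overline{\exp(-\pi t^2+2\sqrt\pi\, t\bar z-\bar z^2/2)}$. Pairing $\Psi$ with this function is therefore an antilinear pairing against a function depending antiholomorphically on $\bar z$, which gives a function holomorphic in $z$, and the map $z\mapsto$ kernel is holomorphic into $\rd{\bR}$. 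The same applies to $f_1\in M^\infty\subset\rdf{\bR}$. Hence $\bargmann{f}$ is almost surely entire.

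\emph{(a) and (b).} Circular symmetry of $\bargmann{\cn}(z)$ follows because it is a deterministic scalar multiple of the circularly symmetric variable $\ip{\cn}{g_z}$. Adding the deterministic $F_1$ does not change the variance. By the covariance computation \eqref{eq:covariances}, $\mathrm{Var}=\sqrt2 e^{|z|^2}\smoothedpsd{S}(\omega)$. This is positive because $\phi>0$ everywhere and $S\ge0$ is not a.e.\ zero, so $\smoothedpsd{S}(\omega)=\int e^{-2\pi(t-\omega)^2}S(t)\,dt>0$.

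\emph{(c), the main step.} We would use the standard Bulinskaya-type lemma \cite[Proposition 1.20]{level}. It states: if a $C^1$ random field $Z:\bR^2\to\bR^2$ has densities $p_{Z(t)}$ bounded near $0$, locally uniformly in $t$, then almost surely there is no $t$ with $Z(t)=0$ and $\det Z'(t)=0$.

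Apply this to $Z=(\Re\bargmann{f},\operatorname{Im}\bargmann{f})$. This field is almost surely $C^\infty$ by analyticity. For holomorphic functions the Jacobian determinant is $|\partial_z\bargmann{f}|^2$, so degenerate zeros are exactly zeros with $\partial_z\bargmann f=0$. By (a) and (b), $Z(t)$ is a nondegenerate Gaussian vector with covariance $\tfrac12\sigma^2(z)I$. Its density at $0$ is at most $(\pi\sigma^2(z))^{-1}$, which is continuous in $z$ and hence locally bounded. The lemma also needs mild regularity, namely a.s.\ $C^1$ paths with locally bounded moments of the derivative (or a modulus of continuity for $Z'$). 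These are inherited from the Gaussian process with smooth covariance \eqref{eq:covariance_bargmann}, for example through Kolmogorov's criterion applied to derivatives of the covariance, which are locally bounded because $S$ has polynomial growth.

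The main obstacle is checking these regularity hypotheses precisely in the form required by the cited lemma. If that route is awkward, we would instead argue directly. Cover a compact set by small squares of side $\delta$. A degenerate zero in a square forces $|\bargmann f|\lesssim L\delta^2$ at the center, where $L$ is a local bound on the second derivatives. This event has probability $\lesssim\delta^4/\sigma^2$, so summing over $\delta^{-2}$ squares gives a bound of order $\delta^2\to0$, using tail bounds on $L$.

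Finally, simple zeros of a nonzero entire function are discrete. Moreover, $\bargmann f\not\equiv0$ almost surely since its value at any fixed point is nondegenerate Gaussian.
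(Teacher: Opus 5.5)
Your proposal is correct and takes the same route as the paper: Gaussianity and (a) from the axioms, entireness from the classical Bargmann theory, (b) from Gaussian smoothing of a nontrivial $\psd$, and (c) from the Bulinskaya-type lemma for fields $\bR^2\to\bR^2$ (the paper cites it as \cite[Proposition 6.5]{level}) with the locally uniform density bound $(\pi\sigma^2(z))^{-1}$, which, unlike the paper's wording, does not need local boundedness of $F_1$. Two small points: the path-regularity hypotheses you single out as the main obstacle hold automatically because the paths are entire (in your covering argument you only need $L<\infty$ a.s., so $P(L>K)\to 0$ without tail estimates); and the kernel identity in your analyticity step should read $\overline{e^{|z|^2/2}e^{-ixy}}\,\mo{-y/\sqrt\pi}\tr{x/\sqrt\pi}\window(t)=2^{1/4}e^{-\pi t^2+2\sqrt\pi\, t\bar z-\bar z^2/2}$, since the weight is conjugated when moved into the antilinear slot, and this kernel, antiholomorphic in $z$, is what gives holomorphy of $\bargmann{\Psi}$.
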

	\begin{proof}
    Gaussianity and part \ref{item_a} are clear from \eqref{eq:def_bargmann} since $\ip{\cn}{\mo{-y/\sqrt{\pi}} \, \tr{x/\sqrt{\pi}} \, \window}$ is Gaussian and circularly symmetric for each $(x,y)\in\mathbb{R}^2$.
    As mentioned, $f$ is almost-surely a tempered distribution, and therefore $\bargmann{f}$ is a well-defined entire function \cite{bargmann1961}, \cite[Chapter 1]{folland89}, \cite[Chapter 11.2]{ftfa_gro}.

Let us verify \ref{variance}. Since $\psd$ is non-trivial, there exists a set of positive measure $\Omega$ where $\psd$ is strictly positive. Expanding the definition of the covariance and restricting the convolution to $\Omega$ yields a strictly positive lower bound for the variance of $\bargmann{f}$:
		\begin{align}
			\mathrm{Var}[\bargmann{f}(z)]=\E{\bargmann{\cn} (z)\, \overline{\bargmann{\cn} (z)}} 
			&\geq \sqrt{2}
			e^{\abs{z}^2}
			\int_{\Omega} e^{-2\pi (t \, + \, y/\sqrt{\pi})^2} \psd(t) \, dt \eqqcolon C_z > 0. \label{eq:lower_bound_variance}
		\end{align}
    Note that for $z$ in a given compact set, the last estimate can be made uniform. For \ref{multiplezeros}, we apply \cite[Proposition 6.5]{level}, which implies \ref{multiplezeros} provided that the probability density function of $\bargmann{f}(z)$ is bounded near $0$, locally uniformly in $z$ (that is, uniformly for $z$ in any given compact set).
    
    Recall that $\bargmann{f}(z)$ is a Gaussian vector with mean $\bargmann{f_1}(z)$, which is locally bounded by analyticity, while its variance is uniformly positive for $z$ in a given compact set, cf. \eqref{eq:lower_bound_variance}. This gives the desired locally uniform upper bound for the probability density of $\bargmann{f}(z)$ evaluated near $0$.
	\end{proof}
    \begin{remark*}
    Identifying the circularly symmetric complex random variable $\bargmann{\cn}(z)$ with a random vector $X(z)$ on $\mathbb{R}^2$, condition \ref{variance} in the last proposition means that the covariance of $X$ is positive-definite for all $z$. Similarly, \ref{multiplezeros} means that the vector on $\mathbb{R}^2$ corresponding to $\bargmann{f}(z)$ has a full-rank differential matrix at every $z$ where $\bargmann{f}(z)=0$.
    \end{remark*}
	
	We can now proceed to compute the expected number of zeros of the Bargmann transform of colored noise. 

    \begin{proof}[Proof of Theorem \ref{thm:first_intensity_bargmann}]
        Write $F=\bargmann{f_1 + \cn}$, $F_1 = \bargmann{f_1}$, and $F_0 = \bargmann{\cn}$. Since the conditions of Proposition \ref{prop:kacrice_conditions} hold, Kac-Rice's Theorem states that the expected number of zeros of $F$ in a Borel set $\Omega\subseteq\bC$ is:
        \begin{equation*}
            \E{\#\{\zeta \in \Omega: F(\zeta)=0\}} = \int_{\Omega} \Econd{|\operatorname{det} D F(z)|}{F(z)=0}\, \, p_{F(z)}(0) \, dA(z).
        \end{equation*}
        Since $F$ is analytic, $\abs{\det D F(z)} = \abs{\partial_{z} F(z)}^{2}$.
    
        On the other hand, the vector $(F(z), \partial_z F(z))$ has mean $(F_1(z),\, \partial_z F_1(z))$. Applying the Gaussian regression procedure, see, e. g., \cite[Proposition 1.2]{level}, we have
        \begin{equation*}
            \Econd{\abs{\partial_{z} F(z)}^{2}}{F(z)=0} = \E{\abs{W(z)}^2},
        \end{equation*}
        where $W(z)$ is a random variable having a complex normal distribution with mean
        \begin{equation*}
            \mu(z) \coloneqq \partial_z F_1 (z) - \frac{ \E{\partial_{z} F_0 (z)\, \overline{F_0 (z)}} }{\E{|F_0 (z)|^2} } F_1 (z),
        \end{equation*}
        and variance
        \begin{equation*}
            \Lambda (z) \coloneqq \E{|\partial_{z} F_0 (z)|^2} - \frac{\abs{\E{F_0 (z)\, \overline{\partial_{z} F_0 (z)}}}^2 }{\E{|F_0 (z)|^2}},
        \end{equation*}
        which implies that $\E{\abs{W(z)}^2} = \Lambda(z) + |\mu(z)|^2$.
        
        Substituting the covariance expressions from \eqref{eq:covariances} into $\Lambda(z)$ yields:
        \begin{equation}\label{eq:lambda}
            \Lambda(z) 
            = \sqrt{2} e^{|z|^2} 
            \bigl(
            \bigl( \tfrac{1}{4\pi} \smoothedpsd{S}''(\omega) + \omega \smoothedpsd{S}'(\omega) + (1+|z|^2) \smoothedpsd{S}(\omega) \bigr) 
            - (\smoothedpsd{S}(\omega))^{-1} \bigl| \bar{z} \smoothedpsd{S}(\omega) + \tfrac{i}{2\sqrt{\pi}} \smoothedpsd{S}'(\omega) \bigr|^2
            \bigr).
        \end{equation}
        Recalling that $z = x+iy$ and $y = -\sqrt{\pi}\omega$, we obtain:
        \begin{equation*}
            \frac{\big| \bar{z} \smoothedpsd{S}(\omega) + \frac{i}{2\sqrt{\pi}} \smoothedpsd{S}'(\omega) \big|^2}{\smoothedpsd{S}(\omega)} = |z|^2 \smoothedpsd{S}(\omega) + \omega \smoothedpsd{S}'(\omega) + \frac{1}{4\pi} \frac{(\smoothedpsd{S}'(\omega))^2}{\smoothedpsd{S}(\omega)}.
        \end{equation*}
        Replacing the above equation into \eqref{eq:lambda}:
        \begin{equation*}
            \Lambda(z) = \sqrt{2} e^{|z|^2} \smoothedpsd{S}(\omega) \biggl( 1 + \frac{1}{4\pi} \frac{\smoothedpsd{S}''(\omega) \smoothedpsd{S}(\omega) - (\smoothedpsd{S}'(\omega))^2}{(\smoothedpsd{S}(\omega))^2} \biggr) = \sigma^2(z) \biggl( 1 + \frac{1}{4\pi} (\log \smoothedpsd{S})''(\omega) \biggr).
        \end{equation*}
    
        Similarly, using \eqref{eq:covariances}, the mean term $\mu(z)$ becomes:
        \begin{align*}
            \mu(z) &= \partial_z F_1(z) - \biggl( \bar{z} + \frac{i}{2\sqrt{\pi}} \frac{\smoothedpsd{S}'(\omega)}{\smoothedpsd{S}(\omega)} \biggr) F_1(z), \\
            &= \bar{\partial}^* F_1(z) - \frac{i}{2\sqrt{\pi}} (\log \smoothedpsd{S})'(\omega) F_1(z).
        \end{align*}
        
        Noting that $p_{F(z)}(0) = (\pi \sigma^2(z))^{-1} \exp(-|F_1(z)|^2 / \sigma^2(z))$, we combine the variance and squared mean to obtain the intensity:
        \begin{align*}
            \rhoBargmann(z) 
            &= \frac{\Lambda(z) + |\mu(z)|^2}{\pi \sigma^2(z)} \exp\biggl( - \frac{|F_1(z)|^2}{\sigma^2(z)} \biggr)\\            
            &= \biggl( \frac{1}{\pi} 
            + \frac{(\log \smoothedpsd{S})''(\omega)}{4\pi^2}
            + \frac{1}{\pi \sigma^2(z)} \Bigabs{\bar{\partial}^* F_1(z) - \frac{i}{2\sqrt{\pi}} (\log \smoothedpsd{S})'(\omega) F_1(z)}^2 \biggr) \, \exp\biggl( - \frac{|F_1(z)|^2}{\sigma^2(z)} \biggr).
        \end{align*}
    \end{proof}
	
	\subsection{Zeros of the STFT of colored noise}\label{sec:zerosstft}

	As noted in \eqref{eq:bargstft}, the zeros of the STFT can be mapped to those of the Bargmann transform. Therefore, Theorem \ref{thm:first_intensity_stft} is a consequence of Theorem \ref{thm:first_intensity_bargmann} and the application of a change of variables.
	
	\begin{proof}[Proof of Theorem \ref{thm:first_intensity_stft}]
	The zeros of $\text{Spec}_f$ and $\mathcal{B}[f]$ correspond one-to-one under the mapping $\Psi(\tau, \omega) = (\sqrt{\pi}\tau, -\sqrt{\pi}\omega)$. Thus, applying the change of variables and noting that the Jacobian determinant is $|\det J_{\Psi}| = \pi$ yields:
    \begin{equation}\label{eq:rhotfae}
        \rhoTF(\tau, \omega) = \pi \, \rhoBargmann(\sqrt{\pi}\tau - i \sqrt{\pi}\omega).
    \end{equation}
    On the other hand, under the scaling $\tau = x/\sqrt{\pi}$ and $\omega = -y/\sqrt{\pi}$, we have $\bigl(\frac{\partial}{\partial x}, \frac{\partial}{\partial y} \bigr) = \frac{1}{\sqrt{\pi}} \bigl(\frac{\partial}{\partial \tau}, -\frac{\partial}{\partial \omega}\bigr)$ and $\Delta_{x,y} = \frac{1}{\pi} \Delta_{\tau, \omega}$.

	Now, write $G(x,y) \coloneqq e^{-|z|^2}|F_1(z)|^2 = \text{Spec}_{f_1}\bigl(x/\sqrt{\pi},\, -y/\sqrt{\pi}\bigr)$, where $F_1(z) = \mathcal{B}[f_1](z)$.
    The term $\widetilde{G}(x,y) \coloneqq e^{-|z|^2} \big| \bar{\partial}^* F_1(z) - \frac{i}{2\sqrt{\pi}} (\log \smoothedpsd{S})'(\omega) F_1(z) \big|^2$ governs the contribution of the deterministic signal in Theorem \ref{thm:first_intensity_bargmann}. 
 	Expanding the squared modulus in $\widetilde{G}$ we relate it to $G$. In particular, using the identity $\Delta_{x,y} = 4 \partial_z \partial_{\bar{z}}$, it follows that $e^{-|z|^2} |\bar{\partial}^* F_1(z)|^2 = \frac{1}{4}\Delta_{x,y} G(x,y) + G(x,y)$. On the other hand,
    \begin{equation*}
        2 e^{-|z|^2} \Re\left( \bar{\partial}^* F_1(z) \overline{\left( -\frac{i}{2\sqrt{\pi}} (\log \smoothedpsd{S})'(\omega) F_1(z) \right)} \right) = \frac{1}{2\sqrt{\pi}} (\log \smoothedpsd{S})'(\omega) \frac{\partial}{\partial y} G(x,y).
   \end{equation*}
    Therefore, we obtain $\widetilde{G}(x,y) = G(x,y) + \mathcal{D}_S(x,y)$, where:
	\begin{equation*}
		\mathcal{D}_S(x,y) 
        =
        \frac{1}{4}\Delta_{x,y} G(x,y) + \frac{1}{2\sqrt{\pi}}(\log \smoothedpsd{S})'(\omega) \frac{\partial}{\partial y} G(x,y) + \frac{1}{4\pi}\big((\log \smoothedpsd{S})'(\omega)\big)^2 G(x,y).
	\end{equation*}
	
    The formula for $\rhoTF(\tau, \omega)$ follows then from \eqref{eq:rhotfae} followed by the replacements derived in the previous two paragraphs. 
	\end{proof}

	\section{Asymptotic behavior of the spectrogram zeros of colored noise}\label{sec:zeromeananalysis}

    Having established the formula for the first intensity for a generic signal embedded in colored noise, we now isolate the statistics of the background noise. We then consider the pure noise model where the deterministic signal is absent ($f_1=0$). In this scenario, the general formula derived in Theorem \ref{thm:first_intensity_stft} simplifies to:
    \begin{equation}\label{eq:rho_spec_purenoise}
    	\rhoTF(\tau, \omega) = 1 + \frac{(\log \smoothedpsd{S})''(\omega)}{4\pi}, \qquad (\tau, \omega) \in \bR^2.
    \end{equation}
    This expression decomposes the first intensity of zero-mean colored noise into two components: the constant $1$ associated with standard Complex White Noise, and a correction term strictly governed by the second logarithmic derivative of the smoothed PSD.
	Moreover, for certain classes of PSDs with slow decay, this correction term may become negligible as $\omega$ increases, justifying Flandrin's comment in \cite[Section 15.6]{flandrin2018explorations}, where it is stated that the zeros of the spectrogram ``remain almost unchanged'' compared to the white noise case. This is precisely what Theorem \ref{thm:proplimit} formalizes. 
    
    Before proving Theorem \ref{thm:proplimit}, we introduce an intermediate step. The main technical difficulty in analyzing \eqref{eq:rho_spec_purenoise} is the Gaussian convolution applied to the PSD. We show below that if the original PSD satisfies a sub-Gaussian lower bound and its derivatives decay faster than the PSD, then this property extends to the PSD convolved with a Gaussian. Furthermore, we quantify this decay.

    \begin{thm}\label{thm:logder_decay}
		Let $k \in \bN$. Let $\psd: \mathbb{R}\to[0,\infty)$ be $C^{k}(\bR) \cap L^{\infty}(\bR)$. Suppose that there exist $0<\gamma<2$, $\alpha,\beta > 0$, such that $\psd$ satisfies the following sub-Gaussian lower bound
		\begin{equation}\label{eq_xxx}
			\psd(\frequency) \geq \alpha e^{-\beta|\frequency|^\gamma},\qquad \forall\, \frequency \in \bR.
		\end{equation}
		Suppose that the $k$-th derivative of $\psd$ is asymptotically smaller than $\psd$ in the following sense:
		\begin{equation}\label{eq:S_deriv_sum}
			|\psd^{(k)}(\frequency)| \leq C(1+|\frequency|)^{-s} |\psd(\frequency)|, \qquad \forall\, \frequency \in \bR,
		\end{equation} for some $s>0$.
		Write $\smoothedpsd{S}(\frequency) := \psd*\phi\, (\frequency)$
		where $\phi(\frequency)=e^{-2\pi \frequency^2}$.
		
		Let $0<\varepsilon < (1-\gamma/2) s$.
		Then there exists a constant $K=K(C,s,\varepsilon,\alpha,\beta,\gamma)$ such that
		\begin{align}\label{eq_x}
			|\smoothedpsd{S}^{(k)}(\frequency)| \leq K\cdot \|\psd\|_\infty \cdot(1+|\frequency|)^{-\varepsilon} \smoothedpsd{S}(\frequency).
		\end{align}
	\end{thm}
	
	\begin{proof}
		{\bf Step 1}. Throughout this proof, all constants are allowed to depend on the parameters $C,s,\varepsilon,\alpha,\beta,\gamma$; this applies also to the constants implied in the $\lesssim$ notation. We claim that
		\begin{align}\label{eq_a}
			\smoothedpsd{S}(\frequency) \geq \alpha' e^{-\beta'|\frequency|^\gamma}
		\end{align}
		for constants $\alpha',\beta'>0$.
		Indeed, for $|\frequency|
		\geq 1$, we have $|\frequency|+1/2\leq 2|\frequency|$ and
		\begin{align}
			\smoothedpsd{S}(\frequency) &\geq \int_{-1/2}^{1/2} \psd(\frequency-\shift) \phi(\shift) \, d\shift
			\\
			&\gtrsim \int_{-1/2}^{1/2} e^{-\beta|\frequency-\shift|^\gamma} \, d\shift
			\geq \int_{-1/2}^{1/2} e^{-\beta(|\frequency|+1/2)^\gamma} \, d\shift
			\\
			&\geq e^{-\beta 2^{\gamma} |\frequency|^\gamma}.
		\end{align}
		On the other hand, for $|\frequency| \leq 1$
        and $|\tau| \leq 1$, we have $|\omega-\tau|\leq 2$ and
        \[
\smoothedpsd{S}(\omega)
\geq \alpha\int_{-1}^{1}
e^{-\beta|\omega-\tau|^\gamma}\phi(\tau)\,d\tau \geq
\alpha\int_{-1}^{1}
e^{-\beta2^\gamma}\phi(\tau)\,d\tau \gtrsim 1.
\]
Hence, $\smoothedpsd{S}(\frequency) \gtrsim 1$ for $|\frequency| \leq 1$, while the right-hand side of \eqref{eq_a} is $\lesssim 1$ for $|\frequency|
		\leq 1$.
		
		{\bf Step 2}. Let $\gamma < \gamma' < 2$ be such that $\varepsilon = (1-\gamma'/2)s$
        and note that
		\begin{align*}
			e^{-\pi |\frequency|^{\gamma'}} \lesssim e^{-\beta' |\frequency|^{\gamma}},
		\end{align*}
		so we conclude that there exists $\alpha''>0$ such that
		\begin{align}\label{eq_aa}
			\smoothedpsd{S}(\frequency) \geq \alpha'' e^{-\pi|\frequency|^{\gamma'}}.
		\end{align}
		
		{\bf Step 3}.
		Define
        \begin{equation*}
		\smoothedpsd{S}^*(\frequency) := \int_{\mathbb{R}} \psd(\frequency-\shift) (1+|\shift|)^{s} \phi(\shift)\,d\shift.
        \end{equation*}
		Note that $\smoothedpsd{S}^{(k)}=\psd^{(k)}*\phi$ and
		\begin{equation*}
		    (1+|\frequency-\shift|)^{-s} \leq (1+|\frequency|)^{-s} (1+|\shift|)^{s}.
		\end{equation*}
		Then
		\begin{align}
			|\smoothedpsd{S}^{(k)}(\frequency)| \leq \int_{\mathbb{R}} |\psd^{(k)}(\frequency-\shift)|
			\phi(\shift)\,d\shift
			&\leq C\int_{\mathbb{R}} (1+|\frequency-\shift|)^{-s} \psd(\frequency-\shift) \phi(\shift)\,d\shift
			\\
			&\leq C(1+|\frequency|)^{-s} \smoothedpsd{S}^*(\frequency).
		\end{align}
		On the other hand, for $M \geq 0$,
		\begin{align*}
			\smoothedpsd{S}^*(\frequency) &= \int_{|\shift|<M }\psd(\frequency-\shift) (1+|\shift|)^{s} \phi(\shift)\,d\shift
			+\int_{|\shift|>M }\psd(\frequency-\shift) (1+|\shift|)^{s} \phi(\shift)\,d\shift
			\\
			&\leq (1+M)^{s} \smoothedpsd{S}(\frequency)
			+\|\psd\|_\infty \int_{|\shift|>M } (1+|\shift|)^{s} \phi(\shift)\,d\shift
			\\
			&\leq C(1+M)^s \smoothedpsd{S}(\frequency) +C_{s} \|\psd\|_\infty e^{-\pi M^2}.
		\end{align*}
        We choose $M= |\frequency|^{\gamma'/2}$ to conclude that
		\begin{align*}
			\smoothedpsd{S}^*(\frequency) &\leq C'
			(1+|\frequency|)^{s\gamma'/2}\smoothedpsd{S}(\frequency) + C_{s} \|\psd\|_\infty e^{-\pi|\frequency|^{\gamma'}}
			\\
			&\leq C''\big[(1+|\frequency|)^{s\gamma'/2} + \|\psd\|_\infty \big]\smoothedpsd{S}(\frequency).
		\end{align*}
		
		{\bf Step 4}. By \eqref{eq_xxx}, $\|\psd\|_\infty \gtrsim 1$. Thus,
		\begin{align*}
			|\smoothedpsd{S}^{(k)}(\frequency)|
			&\lesssim (1+|\frequency|)^{-s}\big[(1+|\frequency|)^{s\gamma'/2} + \|\psd\|_\infty \big]\smoothedpsd{S}(\frequency)
			\\
			&\lesssim \|\psd\|_\infty \cdot (1+|\frequency|)^{-\varepsilon} \cdot \smoothedpsd{S}(\frequency),
		\end{align*}
		since $\varepsilon = (1-\gamma'/2)s$.
	\end{proof}

	\begin{proof}[Proof of Theorem \ref{thm:proplimit}]
    The PSD $\psd$ is continuous and bounded and has no zeros because of the sub-Gaussian lower bound. Hence $S$ is admissible and we can invoke Theorem \ref{thm:first_intensity_stft} to get
		\begin{align*}
			\rhoTF (\tau,\omega)
			=
			1 + \frac{(\log \smoothedpsd{S}  )''(\omega) } {4\pi}
			= 1
            + \frac{1}{4\pi}\bigg( \frac{(\smoothedpsd{S})''(\omega) }{ \smoothedpsd{S}(\omega) }
            - \frac{(\smoothedpsd{S}'(\omega))^2 }{{\smoothedpsd{S}(\omega)}^2}\bigg).
		\end{align*}
		  We apply then Theorem \ref{thm:logder_decay} by choosing $\theta \in (0,1)$ and setting $\varepsilon_k = \theta (1 - \frac{\gamma}{2} ) s_k$ for $k=1,2$. Since $\theta < 1$, these values satisfy the condition of Theorem \ref{thm:logder_decay}. Thus, \[|\smoothedpsd{S}^{(k)}(\omega)| \leq K \cdot\|\psd\|_\infty \cdot(1+|\omega|)^{-\varepsilon_k} \smoothedpsd{S}(\omega), \qquad k=1,2,\] for some constant $K=K(\theta, C_1, C_2, s_1, s_2, \alpha,\beta,\gamma)$.
          Since $\psd(0) \geq \alpha$, we can assume without loss of generality that $K \cdot\|\psd\|_\infty \geq 1$ and estimate
        \begin{align*}
        \biggabs{\frac{ \smoothedpsd{S}''(\omega) }{\smoothedpsd{S}(\omega)} - \frac{  {(\smoothedpsd{S}'(\omega))}^2 }{ {\smoothedpsd{S}(\omega)}^2  }}
        &\leq
        \biggabs{\frac{ \smoothedpsd{S}''(\omega) }{\smoothedpsd{S}(\omega)}}
        +\biggabs{\frac{  {\smoothedpsd{S}'(\omega)} }{ {\smoothedpsd{S}(\omega)}  }}^2 \leq
        2 K^2 \cdot \|\psd\|_\infty^2 \cdot (1+|\omega|)^{- \theta (1 - \frac{\gamma}{2}) \min(s_2,\, 2 s_1)},
        \end{align*}
		which proves the result.
	\end{proof}
	We now apply the previous result to rational power spectral densities.
	
	\begin{proof}[Proof of Proposition \ref{prop:decay_polinomial}]
    Let $\gamma\in(0,2)$. We apply Theorem \ref{thm:proplimit}. It is clear that $\psd \in C^2$ and, since $\alpha > 0$ and $\deg P < \deg Q$, $\psd$ is bounded.
    
    We will show the required sub-Gaussian lower bound $(P(\frequency)/Q(\frequency))^\alpha \ge a \exp(-b |\frequency|^\gamma)$ with $b=1$. The function $(P(\frequency)/Q(\frequency))^\alpha \, \exp(|\frequency|^\gamma)$ is continuous and strictly positive on $\bR$
    and diverges to $\infty$ as $|\frequency| \to \pm\infty$, so it has a global minimum $a > 0$.
    
    Let $T_1(\frequency) = \psd'(\frequency)/\psd(\frequency)$ and $T_2(\frequency) = \psd''(\frequency)/\psd(\frequency)$. For large $|\frequency|$, both $P'/P$ and $Q'/Q$ are $\mathcal{O}(|\frequency|^{-1})$. Therefore, $T_1(\frequency) = \alpha(P'/P - Q'/Q) = \mathcal{O}(|\frequency|^{-1})$, meaning that $(1+|\frequency|)T_1(\frequency)$ is bounded and \eqref{eq_intro_slow} holds with $s_1=1$. Similarly, $T_2(\frequency) = T_1'(\frequency) + (T_1(\frequency))^2 = \mathcal{O}(|\frequency|^{-2})$, and $(1+|\frequency|)^2 T_2(\frequency)$ is bounded, so \eqref{eq_intro_slow} holds with $s_2=2$.
    
    With $s_1=1$, and $s_2=2$, the decay exponent from Theorem \ref{thm:proplimit} is $\theta (2-\gamma)$. For any target decay rate $\eta \in (0,2)$, we can choose $\gamma = \frac{2-\eta}{2}\in(0,1)$ and $ \theta = \frac{2\eta}{2+\eta}\in(0,1) $, satisfying $\theta (2-\gamma) = \eta$. Theorem \ref{thm:proplimit} then yields:
    \begin{equation*}
        \abs{\rhoTF(\tau, \omega) - 1} \leq K \cdot\|\psd\|_\infty^2 \cdot(1+|\omega|)^{-\eta},
    \end{equation*}
        where $K=K(\theta, \gamma, \psd)$ which proves the result.
    \end{proof}
	
    Finally, let us prove Proposition \ref{prop:gaussianpsd}, concerning the sharpness of Theorem \ref{thm:proplimit}.
    
   \begin{proof}[Proof of Proposition \ref{prop:gaussianpsd}]
   We compute explicitly the smoothed PSD, $\smoothedpsd{S}(\omega) = (\psd * \phi)(\omega)$. Since $\phi(t) = e^{-2\pi t^2}$ and $\psd(t) = e^{-b t^2}$, we get a properly normalized Gaussian whose mean and variance are the sum of the convolved ones \cite[Section 4]{bromiley2003products}:
    \begin{equation*}
        \smoothedpsd{S}(\omega) = \sqrt{\frac{\pi}{2\pi + b}} \exp\biggl( - \frac{2\pi b}{2\pi + b} \omega^2 \biggr).
    \end{equation*}   
    The first intensity reads then:
    \begin{equation*}
        \rhoTF(\tau, \omega) = 1 + \frac{1}{4\pi} (\log(\smoothedpsd{S}))''(\omega) = 1 + \frac{1}{4\pi} \biggl( - \frac{4\pi b}{2\pi + b} \biggr) = 1 - \frac{b}{2\pi + b} = \frac{2\pi}{2\pi + b}.
    \end{equation*}
    \end{proof}

	\section{Numerical Framework and PSD Recovery} \label{sec:simulations}

    The objective of this section is twofold. On the one hand, we propose and implement an algorithm to estimate the smoothed PSD $\smoothedpsd{S}$ based solely on the statistics of the spectrogram zeros. On the other hand, we show how we simulate the point process and validate the formulas derived in the previous sections. Specifically, we describe: how $\smoothedpsd{S}$ can be recovered from the zero set of the STFT, how we define the discrete STFT used in our experiments, and what experimental setup is used to corroborate our results.
    
	\subsection{Estimating the smoothed PSD from the zeros of the spectrogram}\label{sec:zerostopsd}
	
	Our goal is to recover the smoothed PSD $\smoothedpsd{S}$ from the statistics of the zero set
    of the spectrogram of colored noise $\cn$. From \eqref{eq:rho_spec_purenoise}, $\rhoTF(\tau, \omega) - 1 = \frac{(\log \smoothedpsd{S}(\omega))''}{4\pi}$; integrating twice gives us access to $\smoothedpsd{S}$.
    
    \begin{remark}[Identifiability of the smoothed PSD from the zero set]\label{rem:identifiability}
    	Since $\rhoTF$ depends on $\smoothedpsd{S}$ only through
    	$(\log \smoothedpsd{S})''$, two smoothed PSDs differing by a
    	factor $e^{C_1\omega + C_0}$ are indistinguishable from the zero
    	set. Recovering $\smoothedpsd{S}$ by double integration therefore
    	requires fixing two integration constants, which cannot be determined
    	from the zeros alone. To resolve this ambiguity, we must fix the value and slope of $\log \smoothedpsd{S}$ at $\omega=0$.
    \end{remark}
    
	Recall that the integral of $\rhoTF(\tau, \omega)$ over any Borel region of the time-frequency plane equals the expected number of spectrogram zeros therein.
	More formally, for Borel $\Omega \subseteq \bR^2$:
    \begin{equation}\label{eq:remark1_rho}
        \int_{\Omega} \rhoTF(\tau, \omega)\, d\tau d\omega = \E{N_{\text{Spec}_{\cn}}(\Omega)}.
    \end{equation}
	Since in our case $\rhoTF$ depends only on the frequency coordinate $\omega$, we have that for $L>0$ and $\il \in \bR$, writing $I_{\il} \coloneqq [0,\il]$ if $\il \geq 0$ and $I_{\il} \coloneqq [\il,0]$ if $\il<0$:
	\begin{align}\label{eq:remark2_rho}
		\int_{[-L,L] \times I_{\il}} \rhoTF(\tau, \omega)\, d\tau d\omega = 2L \, \sgn(\il) \int_{0}^{\il} \rhoTF(\tau, \omega)\, d\omega.
	\end{align}
	
    To recover $\log(\smoothedpsd{S})$ from $\rhoTF$, we impose the following normalization conditions at $\omega=0$:
	\begin{align}\label{eq:conditions_smooth_normalization}
		\log\smoothedpsd{S}(0) = 0, \qquad (\log\smoothedpsd{S})'(0) = 0.
	\end{align}
	Then, for $\ill \in \bR$, integrating the intensity difference twice yields:
	\begin{equation}
		\int_{0}^{\ill} \int_{0}^{\il} (\rhoTF(\tau, \omega) - 1)\, d\omega\, d\il
		= \frac{1}{4\pi}   \int_{0}^{\ill}  (\log(\smoothedpsd{S}))'(\il)\, d\il
		= \frac{1}{4\pi} \log(\smoothedpsd{S}(\ill)).\label{eq:int_lhs}
	\end{equation}
	Equivalently, using \eqref{eq:remark1_rho} and \eqref{eq:remark2_rho}, the same double integral can be written in terms of the expected number of zeros:
    \begin{align}
        \int_{0}^{\ill} \int_{0}^{\il} \left( \rhoTF(\tau, \omega) - 1 \right) \, d\omega\, d\il
        &=
         \int_{0}^{\ill} \frac{\sgn(\ill)}{2L}\, \E{N_{\text{Spec}_{\cn}}([-L, L]\times I_{\il})} \, d\il - \frac{{\ill}^2}{2}. \label{eq:int_rhs2}
    \end{align}
    Putting together \eqref{eq:int_lhs} and \eqref{eq:int_rhs2} and isolating $\smoothedpsd{S}(\ill)$:
    \begin{equation}
        \smoothedpsd{S}(\ill) = \exp\biggl(\int_{0}^{\ill} \frac{2\pi}{L} \, \sgn(\ill) \, \E{N_{\text{Spec}_{\cn}}([-L, L]\times I_{\il})} \, d\il - 2\pi {\ill}^2 \biggr).
        \label{eq:cnot}
    \end{equation}
    Equation~\eqref{eq:cnot} forms the basis of our estimator for $\smoothedpsd{S}(\ill)$, obtained by replacing the expected zero count with an empirical count.
        
    \subsection{Simulation of colored noise}\label{section:simcolnoise}

    For the numerical implementation, we simulate signals on the interval $[-L, L]\subseteq \bR$, $L\in\bN$, using $K = 4L^2$ samples. The mapping from discrete to continuous coordinates is $\mapcoord(j) \coloneqq -L + j\gridstep$ for $0 \le j < K$, with $\gridstep = 1/(2L)$. To color the noise, we use the discrete Fourier transform (DFT) and its inverse (IDFT), both normalized unitarily: $\fft{\xvect}_{k} = \frac{1}{\sqrt{K}}\sum_{j=0}^{K-1} \xel_{j} e^{-2\pi i \frac{jk}{K}}$ and $\ifft{\xvect}_{k} = \frac{1}{\sqrt{K}}\sum_{j=0}^{K-1} \xel_{j} e^{2\pi i \frac{jk}{K}}$, for $0 \leq k \leq K-1$.
    
    The discrete frequency vector is constructed by circularly shifting the frequencies mapped by $\mapcoord$ so that $k=0$ corresponds to zero frequency, following standard DFT convention: $\frequency_k \coloneqq \mapcoord\bigl((k + 2L^2) \pmod{4L^2}\bigr)$, for $0 \le k \le 4L^2 - 1$. Note that $K = 4L^2$ ensures symmetric resolution between the time and frequency domains: both the temporal sampling resolution and the DFT frequency resolution equal $\gridstep = 1/(2L)$.

    We generate discrete colored noise $\simnoise$ by taking the IDFT of complex white Gaussian noise element-wise multiplied by the square root of the PSD $\psd$. Namely, the discrete complex white noise is constructed from independent real standard normal vectors $\boldsymbol{\xi}_1, \boldsymbol{\xi}_2 \sim \mathcal{N}(0, 1)$ as $\simwhite = \frac{1}{\sqrt{2}}(\boldsymbol{\xi}_1 + i \boldsymbol{\xi}_2)$.
    To properly match the variance of continuous colored noise, we scale it by $1/\sqrt{\gridstep}$, that is, $\sqrt{2L}$:
    \begin{equation}
        \simnoise \coloneqq \sqrt{2L} \, \ifft{\simwhite \odot \sqrt{\psd(\boldsymbol{\frequency})}}.
    \end{equation}
    The dependence on $S$ is suppressed in the notation.

    \noindent\textbf{Covariance of the generated noise:} Expanding the definition of the IDFT, the sample of the noise at index $s$ is:
    \begin{equation}
        \simnoise_s = \frac{1}{\sqrt{2L}} \sum_{r=0}^{4L^2-1} \simwhite_r \sqrt{\psd(\frequency_r)} e^{2\pi i \frac{rs}{4L^2}}.
    \end{equation}
    We compute the covariance between two temporal samples $\simnoise_s$ and $\simnoise_p$. Since $\simwhite$ is independent complex white noise, $\E{\simwhite_r \overline{\simwhite_l}} = \delta_{r,l}$, which yields the following computation:
    \begin{align}
    \E{\simnoise_s \overline{\simnoise_p}} 
                                            &= \biggE{ \biggl( \frac{1}{\sqrt{2L}} \sum_{r=0}^{4L^2-1} \simwhite_r \sqrt{\psd(\frequency_r)} e^{2\pi i \frac{rs}{4L^2}} \biggr) \overline{\biggl( \frac{1}{\sqrt{2L}} \sum_{l=0}^{4L^2-1} \simwhite_l \sqrt{\psd(\frequency_l)} e^{2\pi i \frac{lp}{4L^2}} \biggr)} }, \nonumber \\
                                            &=\frac{1}{2L} \sum_{r=0}^{4L^2-1} \psd(\frequency_r) e^{2\pi i \frac{r(s-p)}{4L^2}}.
    \end{align}
    
    Due to the periodic wrapping of the frequencies, the difference $\frac{r}{4L^2}(s-p) - \frequency_r (\mapcoord(s) - \mapcoord(p))$ is an integer. In fact, from the definition of $\frequency_r$, we know that for each index $r \in \{0, \dots, K-1\}$, there exists an integer $m_r$ such that:
    \begin{equation*}
        \frequency_r = -L + \frac{r + 2L^2 - m_r (4L^2)}{2L} = -L + \frac{r}{2L} + L - 2m_r L = \frac{r}{2L} - 2m_r L.
    \end{equation*}
    Using the fact that $\mapcoord(s) - \mapcoord(p) = (s-p)\gridstep = \frac{s-p}{2L}$, we evaluate the product:
    \begin{equation*}
        \frequency_r (\mapcoord(s) - \mapcoord(p)) = \left( \frac{r}{2L} - 2m_r L \right) \frac{s-p}{2L} = \frac{r(s-p)}{4L^2} - m_r(s-p),
    \end{equation*}
    where $m_r (s-p) \in \mathbb{Z}$. Consequently, $e^{2\pi i \frequency_r (\mapcoord(s) - \mapcoord(p))} = e^{2\pi i \frac{r(s-p)}{4L^2}}$ and therefore:
    \begin{equation}\label{eq:discrete_noise_cov}
        \E{\simnoise_s \overline{\simnoise_p}} =  \frac{1}{2L} \sum_{r=0}^{4L^2-1} \psd(\frequency_r) e^{2\pi i \frequency_r (\mapcoord(s) - \mapcoord(p))}.
    \end{equation}
    For a continuous and integrable PSD whose Fourier transform decays adequately, \eqref{eq:discrete_noise_cov} is a Riemann sum for
    $\fourier{\psd}(\mapcoord(p)-\mapcoord(s))$ with the sampling resolution $\frac{1}{2L}$.
    
    \subsection{Discrete STFT}\label{sec:discretestft}
    To simulate the continuous STFT in $\bR^2$, we restrict the plane to the bounded square $[-L, L]^2$ and introduce a grid $\Lambda_{L}$ defined as:
    \begin{equation}\label{eq:lattice}
        \Lambda_L \coloneqq \Bigl\{ (\tau_k, \omega_j) = \bigl(\mapcoord(k), \mapcoord(j)\bigr) : 0 \leq k,j \leq 4L^2 - 1 \Bigr\} \subset [-L,L]^2.
    \end{equation}
    The distance between adjacent nodes in this grid is exactly the step size $\gridstep$ in both the time and frequency directions. Given a discrete signal vector $\vec{f} \in \bC^{4L^2}$, its discrete STFT with a normalized Gaussian window $\window(t) = 2^{1/4} e^{-\pi t^2}$ is evaluated at each node $\lambda = (\tau_k, \omega_j) \in \Lambda_L$ as:
    \begin{equation}\label{eq:discretizedstft}
        \discreteSTFT{\vec{f}}(\lambda) \coloneqq \gridstep \sum_{s=0}^{4L^2-1} 2^{1/4} \, \vec{f}_s \, e^{-\pi(\tau_s - \tau_k)^2} e^{-2\pi i \tau_s \omega_j}, \quad \lambda \in \Lambda_L,\, \tau_s = \mapcoord(s).
    \end{equation}    
    If $\vec{f}_s = f(\tau_s)$ are the samples of a continuous integrable function $f$, then \eqref{eq:discretizedstft} is a Riemann sum approximation of the continuous transform $\stft{\window}{f}(\tau_k, \omega_j)$.

Near the boundaries the discretization error corresponding to \eqref{eq:discretizedstft} may be significant due to the truncation of the Gaussian window to $[-L, L]$. To prevent this, the simulations incorporate several precautions detailed below.
    
	\subsection{Consistency of the Simulation}

    We now show that the statistical properties of the discrete STFT converge to those of the continuous model as $L \to \infty$. Since a physical point in the time-frequency plane need not lie exactly on the discrete lattice, we evaluate the transform at the nearest available grid point.

    \begin{lemma}
        Let $L\in \bN$. Let $\psd$ be an admissible and continuous PSD. Let $\simnoise \in \bC^{4L^2}$ be a complex circularly symmetric Gaussian vector whose covariance is given by
        \begin{equation}\label{eq:covariance_lemma}
            \E{\simnoise_s \overline{\simnoise_p}} =  \frac{1}{2L} \sum_{r=0}^{4L^2-1} \psd(\frequency_r) e^{2\pi i \frequency_r (\mapcoord(s) - \mapcoord(p))}.
        \end{equation}
        Let $\lambda_A = (\tau_A, \omega_A)$ and $\lambda_B = (\tau_B, \omega_B)$ be two arbitrary fixed coordinates in the time-frequency plane $\bR^2$. Let $\lambda_A^{(L)}$ and $\lambda_B^{(L)}$ denote (a choice of) respective nearest points on the discrete spatial grid $\Lambda_L$ associated with the temporal resolution $\gridstep = \frac{1}{2L}$. As $L \to \infty$, the covariance of the discrete STFT converges to the theoretical continuous covariance of the STFT:
        \begin{equation} \label{eq:covariancestftrescaled}
            \lim_{L \to \infty} \E{\discreteSTFT{\simnoise}(\lambda_A^{(L)}) \, \overline{\discreteSTFT{\simnoise}(\lambda_B^{(L)})}} = \E{\stft{\window}{\cn}(\tau_A, \omega_A) \, \overline{\stft{\window}{\cn}(\tau_B, \omega_B)}}.
        \end{equation}
    \end{lemma}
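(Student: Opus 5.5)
The plan is to compute the discrete covariance exactly, rewrite it as a sum over the frequency grid of $\psd$ against a product of two discrete window spectra, and then pass to the limit with dominated convergence.

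\textbf{Step 1: exact discrete covariance.} Write $\lambda_A^{(L)}=(\tau_{k_A},\omega_{j_A})$ and $\lambda_B^{(L)}=(\tau_{k_B},\omega_{j_B})$. Insert \eqref{eq:discretizedstft} and use \eqref{eq:covariance_lemma}. Interchanging the finite sums gives
\begin{equation*}
\E{\discreteSTFT{\simnoise}(\lambda_A^{(L)})\,\overline{\discreteSTFT{\simnoise}(\lambda_B^{(L)})}}
=\frac{1}{2L}\sum_{r=0}^{4L^2-1}\psd(\frequency_r)\,G_L^{A}(\frequency_r)\,\overline{G_L^{B}(\frequency_r)},
\end{equation*}
where
\begin{equation*}
G_L^{A}(\frequency)\coloneqq\gridstep\sum_{s=0}^{4L^2-1}2^{1/4}e^{-\pi(\tau_s-\tau_{k_A})^2}e^{2\pi i\tau_s(\frequency-\omega_{j_A})},
\end{equation*}
and $G_L^B$ is defined in the same way. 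This uses $\mapcoord(s)=\tau_s$.

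\textbf{Step 2: identify the continuous limit.} By the proof of Proposition~\ref{prop:variancebargmann}, the right-hand side of \eqref{eq:covariancestftrescaled} equals
\begin{equation*}
\int_{\bR}\psd(t)\,G^A(t)\,\overline{G^B(t)}\,dt, \qquad G^A(t)=\fourier{\mo{\omega_A}\tr{\tau_A}\window}(t)=\tr{\omega_A}\mo{-\tau_A}\window(t),
\end{equation*}
up to the conjugation convention, which must be checked consistently. Note that $\abs{G^A(t)}=2^{1/4}e^{-\pi(t-\omega_A)^2}$.

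\textbf{Step 3: uniform Gaussian bounds on $G_L^A$ and pointwise convergence.} This is the main obstacle. The sum defining $G_L^A(\frequency)$ is a truncated Riemann sum, with step $1/(2L)$ over $[-L,L]$, of $\int 2^{1/4}e^{-\pi(t-\tau_A)^2}e^{2\pi i t(\frequency-\omega_A)}\,dt$. The grid point $\tau_{k_A}$ tends to $\tau_A$, because $\abs{\tau_{k_A}-\tau_A}\le\gridstep$ once $L>\abs{\tau_A}$.

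To get good frequency localization I would use Poisson summation. Extend the sum to all of $\gridstep\bZ$; the error is $O(e^{-cL^2})$ uniformly in $\frequency$. The full lattice sum equals
\begin{equation*}
\sum_{m\in\bZ}\widehat{g}\big(\frequency-\omega_{j_A}+2Lm\big),
\end{equation*}
up to phase factors, where $\widehat{g}$ is a Gaussian. Hence, uniformly in $L$,
\begin{equation*}
\abs{G_L^A(\frequency)}\lesssim\sum_{m\in\bZ}e^{-\pi(\frequency-\omega_{j_A}+2Lm)^2}+e^{-cL^2}.
\end{equation*}
For $\abs{\frequency}\le L$ and $\abs{\omega_{j_A}}\le\abs{\omega_A}+1$, the aliased terms with $m\neq0$ are bounded by $e^{-c(L-\abs{\frequency})^2}$. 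This bound is not small near the edge $\abs{\frequency}\approx L$. However, there the $m=0$ partner factor $G_L^B$ is also exponentially small in $L^2$. The product $G_L^A\overline{G_L^B}$ is therefore bounded by $Ce^{-c(\frequency-\omega_A)^2}+Ce^{-cL^2}$.

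For pointwise convergence, the $m=0$ term $\widehat g(\frequency-\omega_{j_A})$ converges to $G^A(\frequency)$, using continuity in $\tau_{k_A}$ and $\omega_{j_A}$. The terms with $m\ne0$ vanish for fixed $\frequency$.

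\textbf{Step 4: pass to the limit.} Regard the $r$-sum as $\int\psd(\frequency)\,G_L^A\overline{G_L^B}(\frequency)\,d\mu_L(\frequency)$. Here $\mu_L$ is the counting measure on the grid $\{\frequency_r\}$, which is $\gridstep\bZ\cap[-L,L)$, with weights $1/(2L)$.

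The polynomial growth of $\psd$, combined with the Gaussian bound from Step 3, gives a summable majorant. The $e^{-cL^2}$ remainder is killed after multiplying by at most $4L^2$ terms of size at most $(1+L)^n$.

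On compact frequency sets, continuity of $\psd$ and uniform convergence of $G_L^A\overline{G_L^B}$ turn the sum into a Riemann sum converging to the integral from Step 2. Tails beyond $\abs{\frequency}>R$ are uniformly small, and letting $R\to\infty$ concludes the proof.
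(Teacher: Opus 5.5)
Your overall route matches the paper's. Your $G_L^A$ is literally the paper's $R_{A,L}$, and the proof structure is the same: Poisson summation into a main term, aliases and a truncation error, then a Gaussian envelope, then a passage to the limit. Your Riemann sums on compacts plus a uniform tail bound are an equivalent substitute for the paper's step-function dominated convergence. The conjugation in Step~2 also checks out: $G_L^A\to\overline{\fourier{\mo{\omega_A}\tr{\tau_A}\window}}$, so the integrand tends to $S\,\overline{\fourier{\mo{\omega_A}\tr{\tau_A}\window}}\,\fourier{\mo{\omega_B}\tr{\tau_B}\window}$, as \eqref{eq:covariance_noise} requires.

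The problem is the alias estimate in Step~3, exactly where you locate the main obstacle.

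\textbf{Where the aliases actually sit.} The aliases of $G_L^A$ are Gaussians centred at $\omega_{j_A}-2Lm$, $m\neq 0$. The period is $1/\gridstep=2L$, the full length of the frequency window, not $L$. Take $|\omega|\le L$, $|\omega_{j_A}|\le M$ and $L\ge 2M$. Then $|\omega-\omega_{j_A}+2Lm|\ge 2L|m|-L-M\ge \tfrac12 L|m|\ge \tfrac12|\omega||m|$. Hence $\sum_{m\neq0}e^{-\pi(\omega-\omega_{j_A}+2Lm)^2}\lesssim e^{-\pi L^2/8}e^{-\pi\omega^2/8}$ uniformly on $[-L,L]$. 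The aliases are exponentially small right up to the edge, so there is no edge problem to compensate for.

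\textbf{Why your compensation fails.} Your bound $e^{-c(L-|\omega|)^2}$ is correct but too lossy, and the fix you propose does not work with it. Smallness of the $m=0$ term of $G_L^B$ near the edge handles the main$\times$alias cross terms. It does not handle the alias$\times$alias term, which your bounds control only by $e^{-2c(L-|\omega|)^2}$, i.e.\ by a constant when $|\omega|\approx L$. Already for $S\equiv 1$, the strip $L-1\le|\omega_r|<L$ contains about $4L$ grid frequencies, each of weight $1/(2L)$. Such a term could therefore contribute $O(1)$ to the discrete covariance for every $L$. So neither your product bound $Ce^{-c(\omega-\omega_A)^2}+Ce^{-cL^2}$ nor the uniform tail estimate in Step~4 follows from what you wrote.

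\textbf{The repair.} With the correct alias bound you get $|G_L^A(\omega)|\lesssim e^{-\pi\omega^2/8}$ for $|\omega|\le L$, which is the paper's envelope \eqref{eq:envelope}. Your product bound is then immediate, and the rest of your argument goes through.
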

    
    \begin{proof}
        Based on \eqref{eq:discretizedstft}, the discrete STFT at $\lambda = (\tau, \omega) \in \bR^2$ can be rewritten as the complex inner product against $\Psi_{\lambda}(\mapcoord(\cdot))$, where $\Psi_{\lambda} \coloneqq \mo{\omega} \tr{\tau} \window$. Evaluating the discrete STFT at the noise vector, expanding the covariance of the transforms and substituting \eqref{eq:covariance_lemma} yields:
        \begin{equation}\label{eq:expected_discrete_cov}
            \E{\discreteSTFT{\simnoise}(\lambda_A^{(L)}) \, \overline{\discreteSTFT{\simnoise}(\lambda_B^{(L)})}} = \gridstep^2 \sum_{s=0}^{4L^2-1} \sum_{p=0}^{4L^2-1} \overline{\Psi_{\lambda_A^{(L)}}(\mapcoord(s))} \Psi_{\lambda_B^{(L)}}(\mapcoord(p)) \left( \gridstep \sum_{r=0}^{4L^2-1} \psd(\frequency_r) e^{2\pi i \frequency_r (\mapcoord(s) - \mapcoord(p))} \right).
        \end{equation}
        Thus, the sum in the above equation reads
        \begin{equation}\label{eq:covariance_disc_stft}
            \sum_{r=0}^{4L^2-1} \gridstep \, \psd(\frequency_r) \left( \gridstep \sum_{s=0}^{4L^2-1} \overline{\Psi_{\lambda_A^{(L)}}(\mapcoord(s))} e^{2\pi i \frequency_r \mapcoord(s)} \right) \overline{\left( \gridstep \sum_{p=0}^{4L^2-1} \overline{\Psi_{\lambda_B^{(L)}}(\mapcoord(p))} e^{2\pi i \frequency_r \mapcoord(p)} \right)}.
        \end{equation}
        For $\xi \in \bR$, we set
        \begin{equation}\label{eq:defR}
            R_{A,L}(\xi) \coloneqq \gridstep \sum_{s=0}^{4L^2-1} \overline{\Psi_{\lambda_A^{(L)}}(\mapcoord(s))} e^{2\pi i \xi \mapcoord(s)},
        \end{equation}
        and analogously $R_{B,L}$, so that \eqref{eq:covariance_disc_stft} reads $\sum_{r} \gridstep\, \psd(\frequency_r)\, R_{A,L}(\frequency_r)\, \overline{R_{B,L}(\frequency_r)}$.

        As $L \to \infty$ the grid resolution $\gridstep \to 0$ and $\abs{\lambda_A - \lambda_A^{(L)}} \to 0$, so one expects $R_{A,L}(\frequency_r) \to \overline{\fourier{\Psi_{\lambda_A}}(\frequency)}$ and $\overline{R_{B,L}(\frequency_r)} \to \fourier{\Psi_{\lambda_B}}(\frequency)$, and the outer sum in \eqref{eq:covariance_disc_stft} to converge to the corresponding integral. Making this precise amounts to controlling two errors, which we now isolate.

        \medskip
        \noindent\emph{Step 1: decomposition of $R_{A,L}$ and $R_{B,L}$.}
        Write $\Psi \coloneqq \Psi_{\lambda_A^{(L)}}$. Since $\mapcoord(s) = (s - 2L^2)\gridstep$, the sampling points are exactly the points of the lattice $\gridstep\bZ$ lying in the window $[-L,L)$, so \eqref{eq:defR} is a truncated lattice sum. Completing it to the full lattice and applying the Poisson summation formula to the Schwartz function $u \mapsto \overline{\Psi(u)}\, e^{2\pi i \xi u}$, gives
        \begin{equation}\label{eq:poisson}
            \gridstep \sum_{s\in\bZ} \overline{\Psi(s\gridstep)}\, e^{2\pi i \xi s\gridstep}
            = \sum_{m\in\bZ} \overline{\fourier{\Psi}(\xi - 2Lm)},
            \qquad \xi \in \bR,
        \end{equation}
        and therefore
        \begin{equation}\label{eq:decomposition}
            R_{A,L}(\xi) \;=\;
            \overline{\fourier{\Psi}(\xi)}
            \;+\;
            \sum_{m \in \mathbb{Z}\setminus\{0\}} \overline{\fourier{\Psi}(\xi - 2Lm)}
            \;-\;
            \gridstep \!\!\!\sum_{\substack{t \in \gridstep\bZ \\ t \geq L \text{ or } t \le -L-\gridstep}}\!\!\! \overline{\Psi(t)}\, e^{2\pi i \xi t},
        \end{equation}
        and likewise for $\lambda_B$. 
        
        Our goal is to approximate $R_{A,L}(\xi)$ by $\overline{\fourier{\Psi}(\xi)}$, when $\frequency_r \in [-L,L)$. By \eqref{eq:decomposition}, it suffices to show that if $\frequency_r \in [-L,L)$, the extra-terms are exponentially small in $L$.

        \medskip
        \noindent\emph{Step 2: bounds.}
        Here $X \lesssim Y$ means $X \le C\,Y$ for a constant $C > 0$ that depends only on the number $M$ fixed below. In particular, $C$ never depends on $L$, on $\xi$, or on the summation index $m$.
        
        Since $\lambda_A^{(L)} \to \lambda_A$ and $\lambda_B^{(L)} \to \lambda_B$, we can fix $M > 0$ such that $\abs{\tau_A^{(L)}}$, $\abs{\omega_A^{(L)}}$, $\abs{\tau_B^{(L)}}$ and $\abs{\omega_B^{(L)}}$ are $\le M$ for all $L$, and assume from now on that $L \geq 2M+2$. Recall that $\bigabs{\fourier{\Psi_{\lambda}}(\xi)} = 2^{1/4} e^{-\pi(\xi-\omega)^2}$, and let $\abs{\xi} \le L$.

        \emph{(I).} Since $\bigl(\xi-\omega_A^{(L)}\bigr)^2 \geq \tfrac{1}{2}\xi^2 - M^2$, we have
        \begin{equation}\label{eq:bound_I}
            \bigabs{\fourier{\Psi}(\xi)} \;\lesssim\; e^{-\pi \xi^2/2} .
        \end{equation}

        \emph{(II).} For $m \in \mathbb{Z}\setminus\{0\}$, the $m$-th alias is centered far from the evaluation window, namely:
        \begin{equation*}
            \bigabs{\xi - 2Lm - \omega_A^{(L)}} \geq 2L\abs{m} - \abs{\xi} - M \geq L\bigl(2\abs{m} - \tfrac{3}{2}\bigr) \geq \tfrac{1}{2} L \abs{m},
        \end{equation*}
        and, since $\abs{\xi} \le L$, the same quantity is $\geq \tfrac{1}{2}\abs{\xi}\abs{m}$. Thus $\bigl(\xi - 2Lm - \omega_A^{(L)}\bigr)^2 \geq m^2(L^2+\xi^2)/8$. Thus,
        \begin{equation}\label{eq:psf_aliases}
            \sum_{m \in \mathbb{Z}\setminus\{0\}} \bigabs{\fourier{\Psi}(\xi - 2Lm)} \;\lesssim\; e^{-\pi L^2/8}\, e^{-\pi \xi^2/8} .
        \end{equation}

        \emph{(III).} In order to bound the truncation error in \eqref{eq:decomposition}, we note that $\bigabs{\overline{\Psi(t)}e^{2\pi i \xi t}} = \window\bigl(t - \tau_A^{(L)}\bigr)$, and that, if  $\abs{t} \geq L$ then $\abs{t - \tau_A^{(L)}} \geq L - M \geq \tfrac{1}{2}L$. Comparing the sum with the corresponding Gaussian integral,
        \begin{equation}\label{eq:truncation}
             \gridstep \!\!\!\sum_{\substack{t \in \gridstep\bZ \\ t \geq L \text{ or } t \le -L-\gridstep}}\!\!\! \bigabs{\Psi(t)}
            \;=\;
            2^{1/4}\, \gridstep \!\!\!\sum_{\substack{t \in \gridstep\bZ \\ t \geq L \text{ or } t \le -L-\gridstep}}\!\!\! e^{-\pi (t-\tau_A^{(L)})^2}
            \;\lesssim\;
            \int_{\abs{u} \geq L/2} e^{-\pi u^2}\, du
            \;\lesssim\; e^{-\pi L^2/8}.
        \end{equation}

        Combining \eqref{eq:bound_I}--\eqref{eq:truncation} in \eqref{eq:decomposition}, and using $e^{-\pi L^2/8} \le e^{-\pi \xi^2/8}$ for $\abs{\xi} \le L$, we obtain
        \begin{equation}\label{eq:envelope}
            \abs{R_{A,L}(\xi)} \;\lesssim\; e^{-\pi \xi^2/8},
            \qquad L \geq 2M+2, \quad \abs{\xi} \le L,
        \end{equation}
        and the same bound applies to $\abs{R_{B,L}(\xi)}$.
        
        \medskip
        \noindent\emph{Step 3: dominated convergence.}
        We set $I_r^{(L)} \coloneqq [\frequency_r, \frequency_r + \gridstep)$ and construct the step functions
        \begin{equation*}
            F_L(\frequency) \coloneqq \sum_{r=0}^{4L^2-1} \psd(\frequency_r) \, R_{A,L}(\frequency_r) \, \overline{R_{B,L}(\frequency_r)}\,
            \mathbf{1}_{I_r^{(L)}}(\frequency),
        \end{equation*}
        from which one verifies $\int_\bR F_L(\frequency)\, d\frequency = \E{\discreteSTFT{\simnoise}(\lambda_A^{(L)}) \, \overline{\discreteSTFT{\simnoise}(\lambda_B^{(L)})}}$.

        To show pointwise convergence, we fix $\frequency \in \bR$ and let $L > \max\{\abs{\frequency}, 2M+2\}$. Let $r = r(L,\frequency)$ be the index with $\frequency \in I_r^{(L)}$, so $\abs{\frequency_r - \frequency} \le \gridstep \to 0$. By \eqref{eq:psf_aliases} and \eqref{eq:truncation}, the decomposition \eqref{eq:decomposition} reads
        \begin{equation*}
            R_{A,L}(\frequency_r) = \overline{\fourier{\Psi_{\lambda_A^{(L)}}}(\frequency_r)} + O\bigl(e^{-\pi L^2/8}\bigr),
        \end{equation*}
        while $(\lambda,\eta) \mapsto \fourier{\Psi_{\lambda}}(\eta)$ is continuous. Since $\lambda_A^{(L)} \to \lambda_A$ and $\frequency_r \to \frequency$, we get $R_{A,L}(\frequency_r) \to \overline{\fourier{\Psi_{\lambda_A}}(\frequency)}$ and $\overline{R_{B,L}(\frequency_r)} \to \fourier{\Psi_{\lambda_B}}(\frequency)$; and $\psd(\frequency_r) \to \psd(\frequency)$ by continuity of $\psd$. So $F_L(\frequency) \to \psd(\frequency) \overline{\fourier{\Psi_{\lambda_A}}(\frequency)} \fourier{\Psi_{\lambda_B}}(\frequency)$ for every $\frequency \in \bR$.

        Now we exhibit an integrable function that bounds $F_L$. Since $\psd$ is admissible, there are $n>0$ and $C_{\psd}>0$ with $\psd(\frequency) \le C_{\psd}(1+\abs{\frequency})^{n}$ for all $\frequency \in \bR$. Let $L \geq 2M+2$, $\frequency \in [-L,L)$ and $r$ as above.
        
        Since $\abs{\frequency_r} \le L$, we apply \eqref{eq:envelope} obtaining
        \begin{equation*}
            \abs{F_L(\frequency)} = \psd(\frequency_r)\,\abs{R_{A,L}(\frequency_r)}\,\abs{R_{B,L}(\frequency_r)} \; \lesssim \; (1+\abs{\frequency_r})^n\, e^{-\pi \frequency_r^2/4}.
        \end{equation*}

        To replace $\frequency_r$ by $\frequency$ we note that $\abs{\frequency - \frequency_r} \le \gridstep \le 1$, $1 + \abs{\frequency_r} \le 2 + \abs{\frequency}$ and $\frequency_r^2 \geq \frequency^2/2 - 1$. Thus, $e^{-\pi \frequency_r^2/4} \le e^{\pi/4}\, e^{-\pi \frequency^2/8}$ and therefore
        \begin{equation}\label{eq:domination}
            \abs{F_L(\frequency)} \le \Phi(\frequency) \coloneqq C\,(2+\abs{\frequency})^{n}\, e^{-\pi \frequency^2/8},
            \qquad \frequency \in [-L,L), \quad L \geq 2M+2,
        \end{equation}
        where $C$ does not depend on $L$ or $\frequency$. Since $F_L$ vanishes off $[-L,L)$, the bound holds for any $\frequency\in\bR$. 

        Since $\Phi \in L^1(\bR)$, the limit passes inside the integral by dominated convergence:
        \begin{equation}\label{eq:discrete_limit_covariance}
            \lim_{L \to \infty} \E{\discreteSTFT{\simnoise}(\lambda_A^{(L)}) \, \overline{\discreteSTFT{\simnoise}(\lambda_B^{(L)})}} = \int_{\bR} \lim_{L \to \infty} F_L(\frequency) \, d\frequency = \int_{\bR} \psd(\frequency) \overline{\fourier{\Psi_{\lambda_A}}(\frequency)} \fourier{\Psi_{\lambda_B}}(\frequency) \, d\frequency,
        \end{equation}
        which is exactly \eqref{eq:covariance_noise} applied to $\stft{\window}{\cn}(\tau_A, \omega_A) = \ip{\cn}{\Psi_{\lambda_A}}$ and $\stft{\window}{\cn}(\tau_B, \omega_B) = \ip{\cn}{\Psi_{\lambda_B}}$.
    \end{proof}
	
    \subsection{Experimental setup and numerical estimators}
    In all our experiments, the STFT of the colored noise is simulated according to \eqref{eq:discretizedstft}. To mitigate boundary effects, we simulate the discrete STFT over an extended grid $[-L_{sim}, L_{sim}]^2$ with $L_{sim} = 50$ and resolution $\gridstep = 0.01$. However, we restrict our zero-counting analysis to the inner domain $[-L, L]^2$ with $L = 25$. This margin reduces truncation and aliasing artifacts and refines the approximation of the theoretical continuous covariance.
    
    In addition, we define a parametric model for the PSDs that are considered in our simulations:
    \begin{equation}\label{eq:psd_model}
        \psd_{p,c}(\frequency) = \frac{K_{p,c}}{\left(c+\frequency^2\right)^{p}},
    \end{equation}
    where $p,c>0$ and the constant $K_{p,c}>0$ is chosen so that
    $(\psd_{p,c})_\phi(0)=
    \int_{-\infty}^{\infty} e^{-2\pi \frequency^2} \psd_{p,c}(\frequency) \, d\frequency = 1$, so that the first condition in \eqref{eq:conditions_smooth_normalization} holds. The second condition in \eqref{eq:conditions_smooth_normalization} also holds because $\psd_{p,c}$ is even:
    $(\psd_{p,c})'_\phi(0)= 4\pi\int_{-\infty}^{\infty} \frequency e^{-2\pi \frequency^2} \psd_{p,c}(\frequency)\, d\frequency=0$.

    \subsubsection{Estimating the first intensity}\label{sec:exp1}
    For a given $\numsim \in \bN$, we simulate $\simnoise_1, \ldots, \simnoise_\numsim$ realizations of complex colored noise with PSD $\psd$ as in \eqref{eq:psd_model} and apply the discrete STFT to each of them as defined in \eqref{eq:discretizedstft}. We extract the zero set using the Minimal Grid Neighbors (MGN) method, introduced in \cite{flandrin2015time}. In this approach, the zeros of $\discreteSTFT{\simnoise_l}$ are identified as the grid points where the magnitude $\abs{\discreteSTFT{\simnoise_l}}$ attains a local minimum; see \cite{efficient} for analysis of the computation of the zero set of the Gabor transform of signals and noise. 
    
    When a grid-point $\lambda\in\Lambda_L$ is selected by the MGN algorithm, it is likely that a spectrogram zero is present in the box $Q_\lambda \coloneqq \lambda+[-\gridstep/2,\gridstep/2]^2$. To account for boundary effects of the count of zeros in an observed window $\Omega\subseteq \bR^2$, each detected zero shall be weighted by the fraction $|Q_\lambda\cap\Omega|/\gridstep^2$. For most bins, the endpoints are placed midway between grid nodes, and this yields ordinary counts. Formally, given an observation window $\Omega \subseteq [-L,L]^2$ we denote by $\mathcal{X}_l \subseteq \Lambda_L$ the set of grid points selected by MGN for $\discreteSTFT{\simnoise_l}$ and define
    \begin{equation*}
    N_{\discreteSTFT{\simnoise_l}}(\Omega) 
    \coloneqq
    \sum_{\lambda \in \mathcal{X}_l} \frac{\abs{Q_\lambda \cap \Omega}}{\gridstep^2}
    =
    \# \bigl\{\lambda \in \mathcal{X}_l : Q_\lambda \subseteq \Omega\bigr\}
+ \sum_{\substack{\lambda \in \mathcal{X}_l \\ Q_\lambda \not\subseteq \Omega}} \frac{\abs{Q_\lambda \cap \Omega}}{\gridstep^2},
    \end{equation*}
    where $\abs{\cdot}$ is the Lebesgue measure on $\bR^2$.
    
    Although MGN is mainly used in white noise scenarios ($\psd \equiv 1$), the main intuition behind the method is the analyticity of the Bargmann transform, which also holds for colored noise. The results of our comparison between numerical estimators and derived theoretical quantities (see below) empirically validate the use of MGN when applied to colored noise with a mildly varying $\psd$.
  
    For a partition $I_1, \ldots, I_m$ of the frequency axis $[-L, L]$, we estimate $\rhoTF(\omega)$ using a histogram normalized by the number of realizations and the size of the rectangular strips where zeros are counted. More precisely:
    \begin{equation}\label{eq:histogram}
        \hatrhoTF(\omega) =
        \sum_{l=1}^{\numsim}
        \sum_{\substack{j=1 \\ \omega \in I_j}}^{m}
        \frac{N_{\discreteSTFT{\simnoise_{l}}}([-L,L] \times I_j)}{\numsim \cdot 2L \cdot |I_j|},\qquad \frequency\in [-L,L],
    \end{equation}
    which we compare with the true spectrogram intensity. The quantity $N_{\discreteSTFT{\simnoise_{l}}}([-L,L] \times I_j)$ is computed with the MGN algorithm.
    
    As a benchmark of this estimator, the true first intensity \eqref{eq:rho_spec_purenoise} is computed by expanding $\log(\smoothedpsd{S})''$ and computing the necessary integrals numerically.
    
    \subsubsection{Estimation of the smoothed PSD out of zeros}\label{sec:exp2}
    We now estimate the smoothed PSD $\smoothedpsd{S}$ via the identity in \eqref{eq:cnot}: for every frequency $\omega \in \bR$,
    \begin{equation}\label{eq:exp2_coreformula}
        \smoothedpsd{S}(\omega) = (\phi * \psd) (\omega) = \exp\left(\sgn(\omega) \int_{0}^{\omega} \frac{2\pi}{L} \,  \, \E{N_{\text{Spec}_{\cn}}([-L, L] \times I_\nu)}\, d\nu - 2\pi {\omega}^2 \right),
    \end{equation}
    where $\phi(t) = e^{-2 \pi t^{2}}$ and $I_\nu$ is as in \eqref{eq:remark2_rho}.
    
   Let $\empintegral_l(\omega)$ denote the empirical integral of the zero-count for a single realization $l$:
    \begin{equation*}
        \empintegral_l(\omega) = \frac{2\pi}{L} \sgn(\omega) \int_{0}^{\omega}  \, N_{\discreteSTFT{\simnoise_{l}}}([-L, L] \times I_\nu) \, d\nu.
    \end{equation*}
    The average integral across all realizations is $\bar{\empintegral}(\omega) = \frac{1}{\numsim} \sum_{l=1}^{\numsim} \empintegral_l(\omega)$. Let $m_\omega=\mathbb E[\empintegral_1(\omega)]$ and
$v_\omega=\operatorname{Var}(\empintegral_1(\omega))$. Since
$\bar \empintegral(\omega)=\frac1{\numsim}\sum_{l=1}^{\numsim} \empintegral_l(\omega)$
is the average of i.i.d. random variables, the central limit theorem gives
$\bar \empintegral(\omega)\approx
\mathcal N\left(m_\omega,\frac{v_\omega}{\numsim}\right)$.
Consequently, $\exp(\bar \empintegral(\omega))$
is approximately log-normal, with mean approximately
\[
\exp\left(m_\omega+\frac{v_\omega}{2\numsim}\right).
\]
To improve the finite sample performance, we estimate $v_\omega$ by
\[
\hat{\sigma}_\empintegral^2(\omega)
=
\frac{1}{\numsim-1}
\sum_{l=1}^{\numsim}
\left(\empintegral_l(\omega)-\bar \empintegral(\omega)\right)^2,
\]
and incorporate the log-normal bias correction by defining
\begin{equation}\label{eq_new_est}
\widehat{\smoothedpsd{S}}(\omega)
=
\exp\Big(
\bar \empintegral(\omega)-2\pi\omega^2
-\frac{\hat{\sigma}_\empintegral^2(\omega)}{2\numsim}
\Big).
\end{equation}
We shall test \eqref{eq_new_est} numerically; a statistical analysis of \eqref{eq_new_est} is beyond the scope of this paper.

\section{Numerical Experiments}\label{sec:numericalexperiments}
    
    In this final section, we present the simulation results. We consider two experiments: the first with only zero-mean colored noise, and the second with a deterministic signal embedded in colored noise. The results presented in this section were generated with GNU Octave 9.4.0. The corresponding code is available at \cite{code}.
    
	\subsection{Experiment 1: Zero-Mean noise.}
	
    The first experiment simulates the STFT of complex colored noise with PSD $\psd_{0.5,\, 4}$. We generate $\numsim=5$, $10$, $100$, and $1000$ realizations, apply the discrete STFT, and detect the zero set via the Minimal Grid Neighbors (MGN) method. We then assess the performance of $\hatrhoTF$ and $\widehat{\smoothedpsd{S}}(\omega)$, the empirical estimators for the first intensity and the smoothed PSD, respectively.
    	
    Figure~\ref{fig:exp1_spectrogram} displays the spectrogram on a logarithmic scale and the corresponding zeros for a single realization of this noise. We observe that close to $\frequency=0$, a high-energy ridge pushes the zeros slightly outward along the frequency axis. Conversely, a dense cloud of zeros accumulates around $\frequency=\pm 2$, where the spectrogram magnitude descends from approximately $-50$ to $-100$ dB.

    \begin{figure}[tbh]
		\centering
		\includegraphics[width=0.7\linewidth]{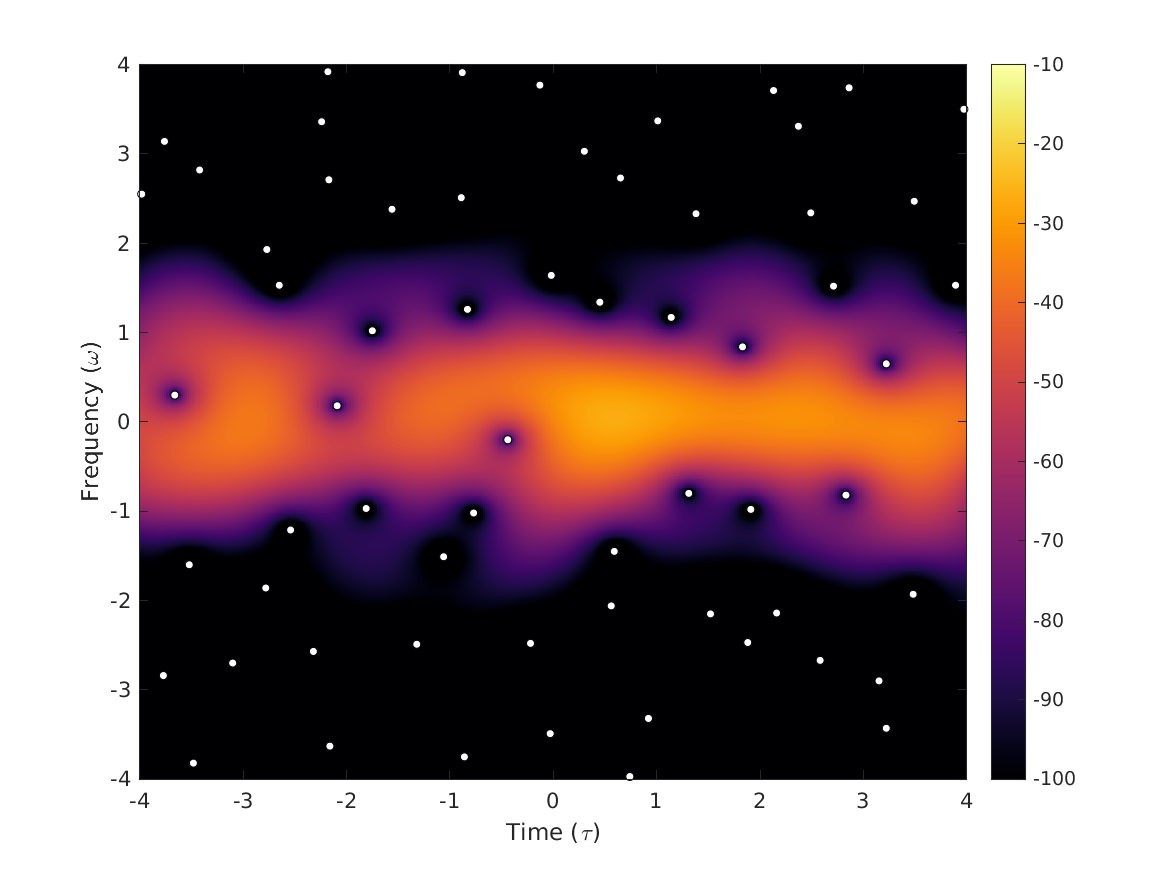}
		\caption{Spectrogram and zeros of colored noise with PSD $\psd_{0.5, 4}$ and zero mean.}
		\label{fig:exp1_spectrogram}
	\end{figure}

    The visual behavior of the zeros described above is quantitatively backed up by the first intensity $\rhoTF$ shown in Figure~\ref{fig:exp1_rho1_comparison}. Values strictly below $1$ are clearly seen around $\frequency=0$, confirming a lower spatial density of zeros in that region. In contrast, near $\frequency=\pm 2$, the first intensity rises above $1$, before ultimately approaching $1$ as the frequency moves further away from $0$, illustrating Prop.~\ref{prop:decay_polinomial}. Furthermore, the empirical estimate of the zero density, $\hatrhoTF(\omega)$, shows improved convergence toward the true theoretical distribution $\rhoTF(\omega)$ as $\numsim$ increases. This provides numerical support for both the accuracy of the discretization method and the first intensity formula derived in this work.

    \begin{figure}[tbh]
        \centering
        \hspace*{-0.7cm}\includegraphics[width=0.95\textwidth]{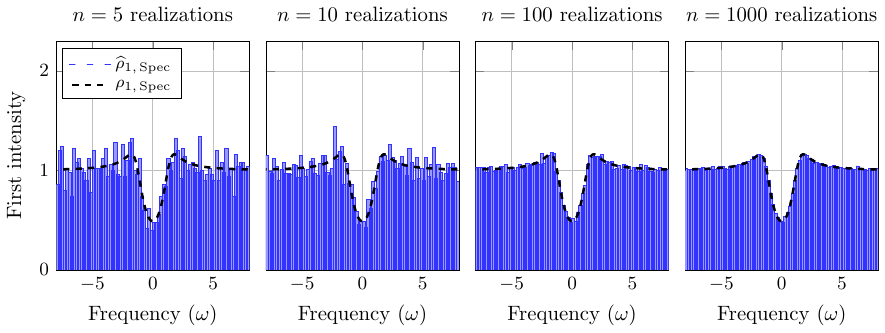}
        \caption{Comparison of the empirical zero density estimate $\hatrhoTF(\omega)$ as in \eqref{eq:histogram} for PSD $\psd_{0.5,\ 4}$ (zero mean) for various numbers of realizations vs. the true $\rhoTF(\omega)$.}
        \label{fig:exp1_rho1_comparison}
    \end{figure}
    	
    From the zero distribution, we estimate $\widehat{\smoothedpsd{S}}$ via \eqref{eq_new_est} (Figure~\ref{fig:exp1_smoothedpsd_comparison}). The estimate is accurate even with very few realizations, and converges to the true smoothed PSD $\smoothedpsd{S}$ as $\numsim$ grows. The perfect alignment at $\frequency=0$ is a direct consequence of the normalization conditions \eqref{eq:conditions_smooth_normalization}, as discussed in Remark~\ref{rem:identifiability}.
    	
	\begin{figure}[tbh]
		\centering
		\includegraphics[width=0.6\textwidth]{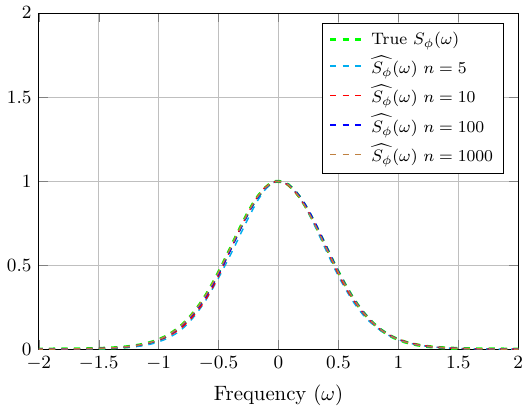}
		\caption{Estimation of the smoothed PSD $\smoothedpsd{S}$ as described in \eqref{eq_new_est} for different numbers of realizations.
		The signal is zero-mean colored noise with PSD $\psd_{0.5, 4}$.}
		\label{fig:exp1_smoothedpsd_comparison}
	\end{figure}
	
    \subsection{Experiment 2: Sensitivity to Non-Zero Mean}
	
	The second experiment investigates the performance of $\hatrhoTF$ and $\widehat{\smoothedpsd{S}}$ on signals with a non-zero mean. The rationale behind this experiment is that our estimators, although designed for zero-mean noise, remain effective provided the deterministic signal is sparse in the time-frequency plane. Specifically, if the signal's spectrogram manifests as thin, highly localized ridges, its high-energy content only alters the spatial distribution of the zeros locally. Because the background noise continues to dominate the vast majority of the plane, the zero-set statistics remain largely intact in regions where the signal perturbation is small. 
	
    To demonstrate this, we analyze a deterministic chirp, $f_1(\tau) = \exp(i \pi \tau^2)$, superimposed with colored noise having a PSD of $\psd_{1, 1}$. The Signal-to-Noise Ratio (SNR) is set to 20 dB.
	
    The spectrogram of this signal, shown in Figure~\ref{fig:exp2_spectrogram}, reveals how the high-energy ridge of the chirp acts as a repulsor, clearing zeros from its immediate vicinity. In the background, the decay of the noise PSD is also visible. The progression of the empirical zero density estimate, $\hatrhoTF(\omega)$, is shown in Figure~\ref{fig:exp2_rho1_comparison} for up to $\numsim=1000$ realizations, highlighting a slightly reduced density of zeros near $\frequency=0$, which coincides with the peak of the PSD. Figure~\ref{fig:exp2_smoothedpsd_comparison} shows the estimation of the smoothed PSD. Since our estimator is derived under a pure-noise assumption, the presence of the deterministic signal introduces a systematic bias. Consequently, the convergence is noticeably slower than in the zero-mean case, and even for a large number of realizations, the empirical curve provides a reasonable approximation rather than a perfect reconstruction of $\smoothedpsd{S}$.

	\begin{figure}[tbh]
		\centering
		\includegraphics[width=0.7\linewidth]{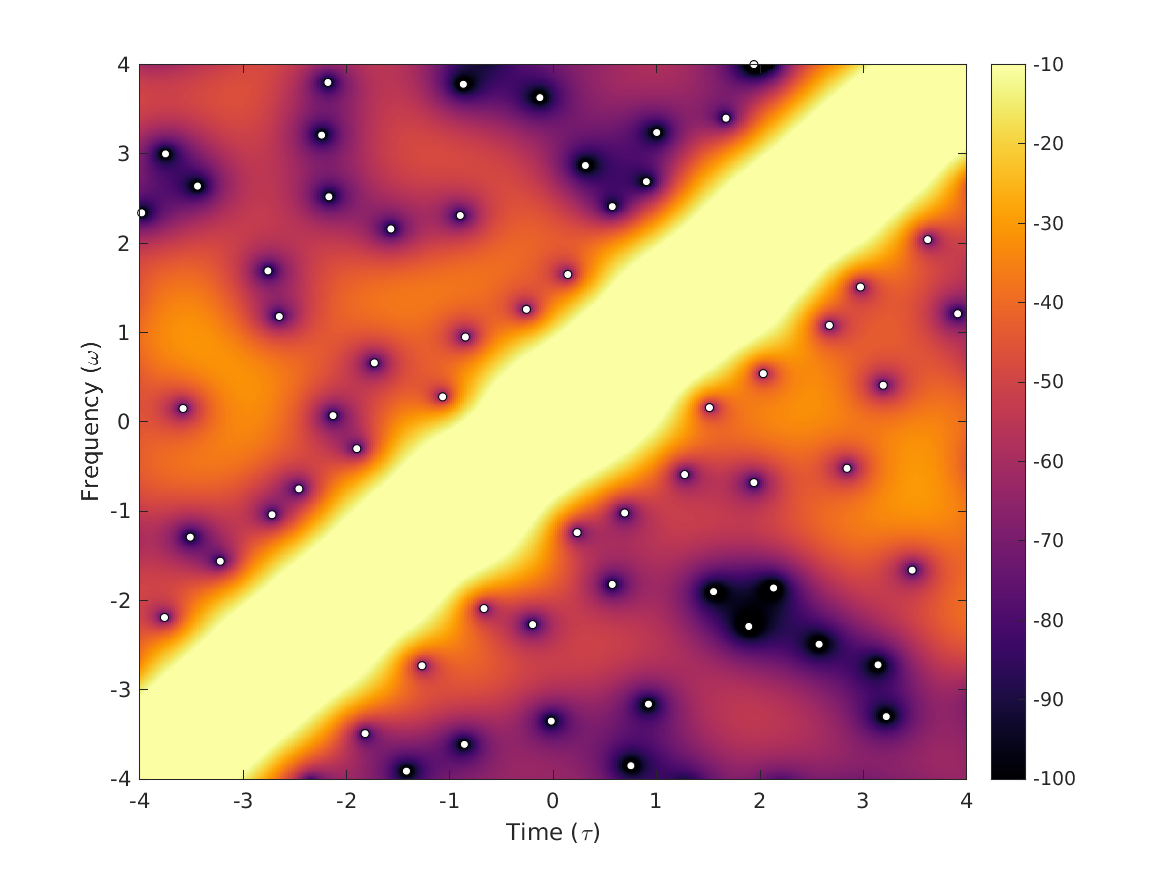}
		\caption{Spectrogram and zeros of a chirp $f_1(\tau)=\exp(i \pi \tau^2)$ with added complex colored Gaussian noise having PSD $\psd_{1,1}$ and an SNR of 20 dB.}
		\label{fig:exp2_spectrogram}
	\end{figure}

    \begin{figure}[tbh]
        \centering
        \hspace*{-0.8cm}\includegraphics[width=0.95\textwidth]{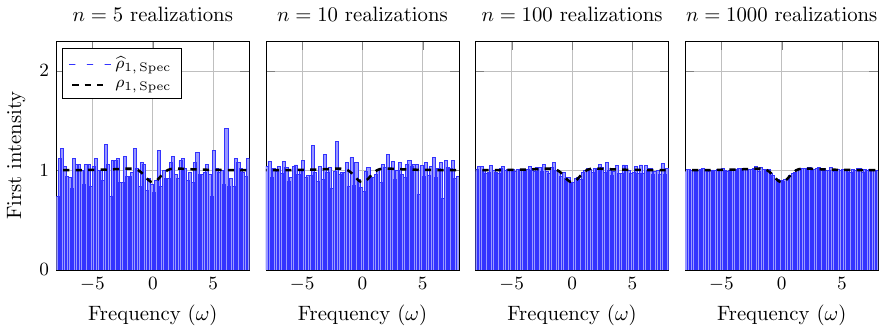}
        \caption{Comparison of the empirical zero density estimate $\hatrhoTF(\omega)$ for a chirp with added colored noise. As $\numsim$ increases, the estimate converges to the density of the background noise, even if the signal has a chirp.}
        \label{fig:exp2_rho1_comparison}
    \end{figure}

	\begin{figure}[tbh]
		\centering
		\includegraphics[width=0.6\textwidth]{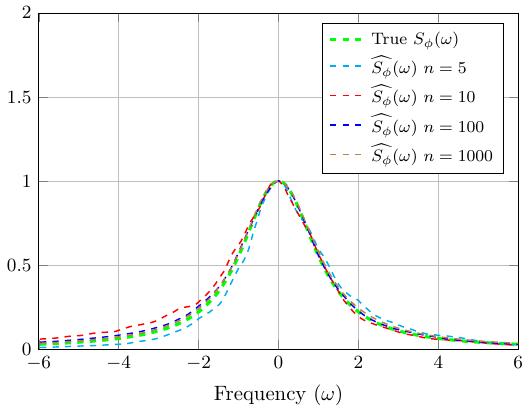}
		\caption{Estimation of the smoothed PSD ($\smoothedpsd{S}$) for the chirp signal with non-zero mean. The underlying noise parameters are successfully recovered.}
		\label{fig:exp2_smoothedpsd_comparison}
	\end{figure}

\end{document}